\journalname{Queueing Systems}
\newcommand{\Poi}{\mathrm{Poi}}
\newcommand{\dd}{\mathrm{d}}
\newcommand{\E}{\mathrm{E}}
\newcommand{\ds}{\displaystyle}
\newcommand{\trunc}{\mathrm{trunc}}
\newcommand{\q}{\hspace*{1em}}
\newcommand{\rrr}[1]{#1}
\newcommand{\red}[1]{#1}
\newcommand{\blue}[1]{#1}
\spnewtheorem{corollary}[theorem]{Corollary}{\bf}{\it}
\spnewtheorem{lemma}[theorem]{Lemma}{\bf}{\it}
\spnewtheorem{remark}[theorem]{Remark}{\bf}{\it}
\spnewtheorem{definition}[theorem]{Definition}{\bf}{\it}
\spnewtheorem{assumption}[theorem]{Assumption}{\bf}{\it}
\begin{document}

\title{Time-dependent queue length distribution in queues fed by $K$
customers in a finite interval
}

\titlerunning{Queues fed by $K$ customers in a finite interval}        

\author{Kaito Hayashi \and Yoshiaki Inoue \and Tetsuya Takine}


\institute{K Hayashi \and Y. Inoue \and T. Takine \at
Department of Information and Communications Technology, 
Graduate School of Engineering, 
The University of Osaka, Suita 565-0871, Japan.\\
\email{%
hayashi23@post.comm.eng.osaka-u.ac.jp 
\and yoshiaki@comm.eng.osaka-u.ac.jp 
\and takine@comm.eng.osaka-u.ac.jp}
}

\date{Received: date / Accepted: date}

\maketitle

\begin{abstract}
We consider queueing models, where customers arrive according to a
continuous-time binomial process on a finite interval. In this arrival
process, a total of $K$ customers arrive in the finite time interval
$[0,T]$, where arrival times of those $K$ customers are independent
and identically distributed according to an absolutely continuous
distribution defined by its probability density function $f(t)$ on
$(0,T]$.  To analyze the time-dependent queue length distribution of
this model, we introduce an auxiliary model with non-homogeneous
Poisson arrivals and show that the time-dependent queue length
distribution in the original model is given in terms of the
time-dependent joint distribution of the numbers of arrivals and
departures in the auxiliary model. Next, we consider a numerical
procedure for computing the time-dependent queue length distribution
in Markovian models with piecewise constant $f(t)$.  A particular
feature of our computational procedure is that the truncation error
bound can be specified as an input parameter. 
Some numerical examples are also provided.

\keywords{%
continuous-time binomial process 
\and 
finite interval 
\and 
time-dependent queue length distribution 
\and 
non-homogeneous Poisson process
\and
computational procedure
\and 
truncation error bound
} 


\subclass{60K25 \and 60J22 \and 60J27}
\end{abstract}

\section{Introduction}

The queueing theory is applicable in diverse scenarios, ranging from
waiting in line to order at a cafeteria to waiting for billing at a
hospital. In these service systems, operational hours are
predetermined, during which customers arrive and receive service. A
notable feature of such systems is the variation in customer
arrival rates over time, often exhibiting a peak during specific
hours. Occasionally, the arrival rate may exceed the facility's
maximum service capacity, leading to a significant increase in the
queue length. Understanding these dynamics is essential for optimizing
service efficiency and enhancing customer satisfaction.

Traditional queueing analysis often assumes that the system operates
over a sufficiently long time, focusing on the steady-state 
under stability conditions.  While the steady-state analysis can
provide intuitively appealing results owing to the mathematical
tractability, its relevance to service systems with finite operating
times is limited.  This limitation arises because it is challenging
for stationary queueing models to account for the time-dependent
nature of arrival rates and the resulting temporary overloading.

\smallskip
\noindent
\emph{Time-dependent queues}
Given the significance of accounting for the time-dependent nature of
systems, various approaches have been proposed in the literature. Due
to the extensive body of research on this topic, the literature
review in this paper cannot be exhaustive. Readers may 
consult a recent survey \cite{Schwarz2016} on time-dependent queues
for a more comprehensive overview.

One representative approach to time-dependent queues employs a
deterministic queueing model \cite{May1967}, where the inflow and
outflow of customers are modeled as continuous fluid. This allows the
system dynamics to be expressed through simple differential or integral
equations. The deterministic fluid model is further extended to a
stochastic framework using diffusion processes \cite{Newell1968},
which accounts for the effects of stochastic variations on queueing
behavior.
The fluid and diffusion processes also arise as applications of the
functional law of large numbers and the functional central
limit theorem to the M${}_t$/M${}_t$/1 queue under uniform
acceleration \cite{Massey1985, Mandelbaum1995}, where M${}_t$
indicates that the arrival and service processes follow
non-homogeneous Poisson processes (NHPPs).
Furthermore, various approximate methods for M${}_t$/M${}_t$/1 and
M${}_t$/M${}_t$/$c$ queues have been reported in the literature. These
include a closure approximation for the first two moments
\cite{Rothkopf1979} and the pointwise stationary approximation
\cite{Green1991-1, Green1991-2, Whitt1991}.

While many studies analyze time-dependent queues, only a few address
scenarios that the operational interval is finite. In
\cite{Nazarathy2009}, an optimal control problem for queueing
networks over a finite time horizon is considered. Similarly, in
\cite{Jouini2022}, a scheduling problem for queues with reservations
is discussed, where a finite number of customers arrive according to a
specified distribution around their reservation times.

\smallskip
\noindent
\emph{Motivation} 
The primary motivation of this work is to address a significant yet
often overlooked aspect of the time-dependent queueing theory:
\emph{how the total number $K$ of arrivals in the operating period
$[0,T]$ affects the system performance}.  Note that the total number
$K$ of arrivals is of primary concern to managers of service systems,
as it is the most straightforward metric for making critical decisions
related to resource allocation and operational efficiency.

In time-dependent queueing models, customer arrival processes are
typically formulated as non-homogeneous counting processes, most
commonly NHPPs. Consequently, the total number $K$ of arrivals is
modeled as a random variable derived from these counting processes.
In this approach, however, it is not straightforward to
understand how variations in the total number $K$ of arrivals
influence the system congestion. For instance, it can be challenging
to examine how the maximum number of waiting customers in operational
hours is impacted by a 10\% increase in the value of $K$.

This observation leads us to explore an alternative formulation of
queueing systems that operate over a finite interval $[0,T]$.
Specifically, \emph{we treat the total number $K$ of arrivals in
$[0,T]$ as a predetermined constant}. The arrival times of these $K$
customers are modeled as independent and identically distributed
(i.i.d.) random variables, drawn from an absolutely continuous
probability distribution defined by its probability density function
(pdf) $f(t)$ (see Fig. \ref{fig:model}).

\begin{figure}[!t]
\centering
\includegraphics[keepaspectratio,width=180pt]{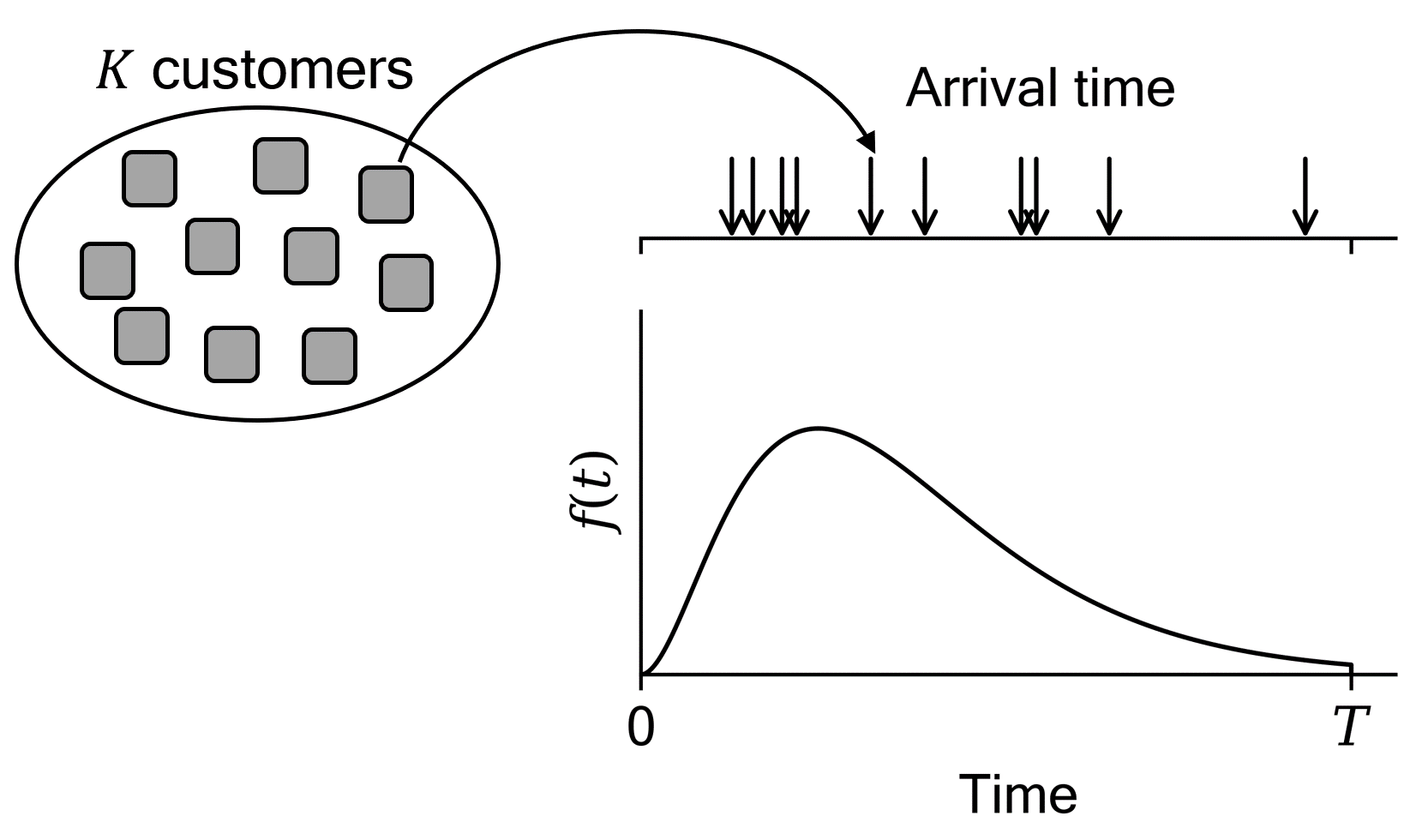}
\caption{Arrival times of $K$ customers are chosen according to 
a distribution with pdf $f(t)$ 
over the finite interval $[0,T]$.}
\label{fig:model}
\end{figure}

We refer to this arrival process as \emph{a continuous-time binomial
process} (CTBP), because the number $A(t_0, t_1)$ ($0 \leq t_0 \leq
t_1 \leq T$) of arrivals in a time-interval $(t_0, t_1]$ follows a
binomial distribution with time-dependent parameter:
\begin{align}
\Pr[A(t_0, t_1) = k] 
&= 
\binom{K}{k} 
\left(
\int_{t_0}^{t_1} f(u) \dd u
\right)^k
\left(
1 - \int_{t_0}^{t_1} f(u) \dd u
\right)^{K-k},
\nonumber
\\
&
\qquad \qquad \qquad \qquad
\qquad \qquad \qquad \qquad
k=0,1,\ldots,K.
\label{eq:CTBP-A-t0-t1}
\end{align}
The CTBP is characterized by $K$ and $f(t)$, which
enables us to treat the total number $K$ of arrivals separately from
the time-dependent likelihood $f(t)$ of arrival times.  In the CTBP,
the numbers of arrivals occurring in disjoint time intervals follow a
multinomial distribution, so that they are not independent.  This
dependence among arrivals introduces complexities in the analysis of
queueing models with CTBP arrivals.

\smallskip
\noindent
\emph{Related Work}
Queueing models with CTBP arrivals are not entirely new, but 
they have not received significant attention until recently. An
earlier study related to this topic can be found in \cite{Minh1977},
where a discrete-time model is analyzed through calculating the
transient probabilities of a Markov chain.
For models with CTBP arrivals, large-population asymptotics are 
explored in \cite{Louchard1988} and \cite{Louchard1994}, which
show weak convergence to Gaussian processes for 
infinite-server and single-server queues.

Although research on queues with CTBP arrivals stagnated after these
studies, the topic has regained interest in recent years. In
\cite{Honnappa2012, Honnappa2015}, a single-server queue with CTBP
arrivals is referred to as a $\Delta_{(i)}$/GI/$1$ queue, where the
authors derive fluid and diffusion limits for large population size
$K$ and they identify several operating regimes based on the load
level.  In \cite{Bet2019, Bet2020}, the convergence to a reflected
Brownian motion with parabolic drift is established under specific
time and spatial scaling limits. In \cite{Bet2017}, it is shown that
when service times follow a heavy-tailed distribution, the queue
length process converges to an $\alpha$-stable process as the
population size $K$ goes to infinity.  Furthermore, in
\cite{Glynn2017, Honnappa2017}, this model is termed the Random
Scatter Traffic Model (RS/G/$1$) and large deviation principles for
the workload process are established.

While these studies primarily focus on scaling limits of the
queue-length and workload processes, only a few have addressed its
exact analysis. In \cite{Bet2022}, the distribution of the number of
customers served during a single operational period is derived for
the case of exponentially distributed inter-arrival and service times, 
utilizing the representation of the system as a Markov chain. 
Additionally, the transient analysis of work in system has been
conducted in \cite{Mandjes2024}, 
deriving the double Laplace transform in time and workload.
\red{Moreover, a model in which customers may already be present in
the system at time $0$ has been considered in \cite{Boxma2025}, where
the probability generating function of the number of customers in the
system after an exponentially distributed time interval is derived.}

We also note that similar models have been considered in the context
of analyzing strategic customer behavior in queues, where $K$ 
customers probabilistically choose their arrival times based on prior
information. In \cite{Glazer1983, Haviv2015}, a game-theoretic
analysis of customer arrival times over a finite time interval is
presented, referring to the model as the ?/M/$1$ queue. 
\red{We refer the reader to \cite{Haviv2021} for a survey on strategic
customer behavior, where Example 1 explains the relationship between
the CTBP/M/1 and the M${}_t$/M/1 queues.}

Although the arrival process considered in this paper has been called
by various names, the CTBP seems to be one of the most suitable terms
for accurately describing its characteristics. Specifically, the
number of arrivals in a given interval follows a binomial distribution,
as shown in (\ref{eq:CTBP-A-t0-t1}), which closely resembles the
relationship between the Poisson process and the Poisson distribution.

\smallskip 
\noindent
\emph{Main Contributions} 
This paper establishes a general methodology for analyzing a broad
class of queueing models with CTBP arrivals. Our key result shows
that, under a fairly weak condition, the time-dependent queue length
distribution can be expressed in terms of the joint distribution of
the cumulative numbers of arrivals and departures in an auxiliary
model with ordinary NHPP arrivals, where the auxiliary model has 
the same service mechanism as the original one.  This observation
significantly simplifies the analysis of queueing models with CTBP
arrivals, reducing it to the analysis of conventional time-dependent
queueing systems with NHPP arrivals.

Moreover, we apply this methodology to a general 
\red{piecewise} Markovian queue with
the piecewise-constant pdf $f(t)$ of arrival times. For this class of
models, we derive a universal formula for an upper bound on the
numerical error caused by truncating infinite series and develop a
computational algorithm into which the truncation error bound is
incorporated. Furthermore, we present numerical examples for the
CTBP/M/$c$ queue and discuss the impact of a total number $K$ of
arrivals on the time-dependent queue length distribution.

The rest of this paper is organized as follows.  In Section
\ref{sec:model}, we introduce the general framework of the model
considered in this paper.  In Section \ref{sec:main_formula}, we
derive our main result, i.e., the connection between queues with CTBP
arrivals and the corresponding auxiliary models with NHPP arrivals.
In Section \ref{sec:application}, we demonstrate its application to
general \red{piecewise} Markovian queues with piecewise constant 
pdf of arrival times.
Specifically, we derive a universal upper bound of the truncation
error and develop a computational procedure.  We present numerical
examples of the CTBP/M/$c$ queue in Section
\ref{sec:numerics}. Finally, we conclude this paper in Section
\ref{sec:conclusion}.

\section{Model}\label{sec:model}

We consider a queueing model with CTBP arrivals. 
\blue{
We assume \rrr{that} 
the system is empty at time 0; a relaxation of this assumption 
is briefly discussed in Section \ref{sec:conclusion}.}
We also assume that a
total of $K$ ($K \in \{1, 2, \ldots\}$) customers arrive during a
finite time interval $[0,T]$ ($T>0$). Arrival times of those $K$
customers are independent and identically distributed (i.i.d.)
according to an \textit{absolutely continuous} distribution with pdf
$f(t)$, where $f(t)=0$ for all $t > T$.  We define $F(s,t)$ ($s,t \geq
0$) as
\begin{align}
F(s,t) 
= 
\begin{cases}
\ds\int_s^t f(\tau) \dd\tau, &0 \leq s \leq t,
\\[4mm]
0, & \mbox{otherwise}.
\end{cases}
\label{eq:F(s,t)}
\end{align}
By definition, we have $F(0,0)=0$ and $F(0,t)=1$ for all $t \geq T$. 
In what follows, we assume 
\[
F(0, t) > 0,
\quad
t > 0.
\]
Let $\{X_m\}_{m=1,2,\ldots,K}$ denote the ordered statistics of
arrival times, where $X_m$ denotes the arrival time of the $m$-th
customer in $(0,T]$. Since $F(0,t)$ is continuous in $t$, 
the CTBP has the orderliness property.
\[
\Pr[0 \leq X_1 < X_2 < \cdots < X_K \leq T] = 1.
\]

We define $A(t)$ ($t \geq 0$) as the number of arrivals in the
interval $[0,t]$.  By definition, we have 
\[
A(0)=0, 
\qquad
A(T)=K,
\qquad
A(s) \leq A(t),
\quad
0 \leq s \leq t.
\]
Since arrival times of $K$ customers are i.i.d.\ according
to the distribution $F(0,t)$ ($t \geq 0$), 
we have (cf.\ (\ref{eq:CTBP-A-t0-t1}))
\[
\Pr[ A(t) = k ] 
=
\binom{K}{k} \big( F(0,t) \big)^k \big( 1 - F(0,t) \big)^{K-k},
\quad
t \geq 0,\ k=0,1,\ldots,K.
\]

Let $D(t)$ ($t \geq 0$) denote the number of departures in the
interval $[0,t]$. By definition, we have 
\[
D(0)=0, 
\qquad
D(s) \leq D(t),
\quad 
0 \leq s \leq t,
\]
\red{and
\begin{equation}
D(t) \leq A(t),
\quad 
0 \leq t \leq T.
\label{eq:Dt-leq-At}
\end{equation}
\rrr{The service mechanism is asssumed to satisfy a condition that we refer
to as the \textit{conditional lack of anticipation assumption
  (conditional LAA)}.}  This assumption requires that, given the entire
sequence of customer arrival times up to time $t$, the number of
departures in the interval $[0,t]$ is conditionally independent of
arrivals occurring after time $t$.  Formally, it is stated as
follows:}

\begin{assumption}[\red{Conditional LAA}]\label{asm:service mechanism}
For any $t \geq 0$, $1 \leq k < m \leq K$, and $j = 0,1,\ldots,k$,
\begin{align}
\lefteqn{%
\Pr[D(t)=j \mid A(t)=k, (X_1,X_2,\ldots, X_m)=(x_1,x_2,\ldots,x_m)]
}\qquad
\notag \\
&=
\Pr[D(t)=j \mid A(t)=k, (X_1,X_2,\ldots, X_k)=(x_1,x_2,\ldots,x_k)].
\label{eq:asm:service mechanism}
\end{align}
\end{assumption}

\begin{remark}
In our setting, the standard LAA \cite{Wolff1982} corresponds to the 
requirement that for any $t \geq 0$, the future arrival process 
$\{A(t+u)-A(t);\, u \geq 0\}$ is independent of the past departure 
process $\{D(s);\, 0 \leq s \leq t\}$. 
In queues with CTBP arrivals, however, the total number of arrivals in 
$[0,T]$ is fixed at $K$, so that arrivals occurring after time $t$ are 
necessarily dependent on the number of arrivals in $[0,t]$. 
Therefore, instead of the usual LAA, we impose the conditional version 
of LAA as stated in Assumption~\ref{asm:service mechanism}.
\end{remark}

\begin{remark}
Assumption~\ref{asm:service mechanism} holds for standard queueing models, 
regardless of the service-time distribution or the number of servers, 
as long as future customer arrivals cannot be anticipated.  
However, it is possible to construct queueing models in which 
Assumption~\ref{asm:service mechanism} does not hold.  
For example, consider a bulk-service single-server queue in which multiple 
customers can be served simultaneously without increasing the service time.  
Suppose that the server has knowledge of future customer arrival times.  
\rrr{In such a setting, the server may choose to wait for the next
  arrival if the remaining interarrival time is shorter than a
  predetermined threshold, even when customers are already waiting and
  the server is idle.}
In this case, the conditional probability on the left-hand side of 
\eqref{eq:asm:service mechanism} depends on $X_{k+1}$, and therefore 
Assumption~\ref{asm:service mechanism} is violated.
\end{remark}

We define $L(t)$ ($t \geq 0$) as the queue length at time $t$. 
\blue{Since $L(0)=0$, we have}
\[
L(t) = A(t) - D(t) \geq 0,
\quad
t \geq 0.
\]
Our primary interest is the time-dependent probability mass function
(pmf) $\pi_{\ell}(t, K)$ of the queue length at time $t$.
\begin{align*}
\pi_{\ell}(t,K) 
=
\Pr[L(t) = \ell],
\quad 
t \geq 0,\ \ell = 0,1,\ldots,K,
\end{align*}
where we intentionally leave the model parameter $K$ in the notation
of time-dependent pmf $\pi_{\ell}(t,K)$.

\section{Connection to the model with NHPP arrivals}
\label{sec:main_formula}

In this section, we first discuss the relation between the CTBP and a
NHPP with a specific rate function. We then introduce the auxiliary
model with NHPP arrivals and show that $\pi_{\ell}(t,K)$ is given in
terms of the time-dependent joint pmf of the numbers of arrivals and
departures in the auxiliary model.

\subsection{Relation between the CTBP and NHPP}

Let $\widehat{A}(t)$ ($t \geq 0$) denote the number of arrivals in the
interval $[0, t]$ in the NHPP with rate function $\lambda(t)$. The
NHPP has the independent increment property, i.e., for $0 \leq s \leq
t$, $k=0,1,\ldots$, and $m=k,k+1,\ldots$,
\begin{align*}
\Pr[ \widehat{A}(s) = k,\ \widehat{A}(t) =m] 
&= 
\Pr[ \widehat{A}(s) = k] \cdot \Pr[\widehat{A}(t)-\widehat{A}(s) = m-k].
\end{align*}
Furthermore, 
\[
\Pr[ \widehat{A}(t) - \widehat{A}(s) = k ]
=
\Poi(\Lambda(s,t), k),
\quad
k=0,1,\ldots,\ 0 \leq s \leq t,
\]
where 
\begin{align}
\Lambda(s,t) 
&= 
\begin{cases}
\ds\int_{s}^{t}\lambda(\tau)\mathrm{d}\tau, & 0 \leq s \leq t,
\\[4mm]
0, & \mbox{otherwise},
\end{cases}
\label{eq:Lambda(s,t)}
\end{align}
and $\Poi(a,k)$ denotes the pmf of the Poisson distribution with mean
$a >0$.
\[
\Poi(a,k) = e^{-a} \frac{a^k}{k!},
\quad 
k=0,1,\ldots.
\]

In the rest of this paper, we assume that the rate function
$\lambda(t)$ is given by
\begin{equation}
\lambda(t) = \alpha f(t),
\quad
t \geq 0,\ \alpha > 0,
\label{eq:def-lambda(t)}
\end{equation}
where $\alpha$ denotes an arbitrarily chosen positive constant.
It then follows from (\ref{eq:F(s,t)}) and (\ref{eq:Lambda(s,t)}) that 
\begin{equation}
\Lambda(s,t) = \alpha F(s,t),
\quad
s,t \geq 0.
\label{eq:Lambda-F}
\end{equation}
Since $f(t)=0$ for $t >T$, no arrivals occur in the NHPP after time
$T$, as in the CTBP.

\begin{lemma}\label{lem:distribution of cumulative arrivals}
For $(t,k) \in (0,T) \times \{0,1,\ldots,K\} \cup [T,\infty) \times \{K\}$,
$m \in \{1, 2, \ldots\}$, 
$t_1, t_2, \ldots, t_m \in [0, t]$, and 
$k_1, k_2, \ldots$, $k_m \in \{0, 1, \ldots, k\}$, 
\begin{align}
\lefteqn{%
\Pr\bigl[A(t_1) = k_1, A(t_2) = k_2, \ldots, A(t_m) = k_m \mid A(t) = k\bigr]
}\qquad
\notag 
\\
&= 
\Pr\bigl[
\widehat{A}(t_1) = k_1, \widehat{A}(t_2) = k_2, \ldots, \widehat{A}(t_m) = k_m 
\mid \widehat{A}(t) = k
\bigr].
\label{eq:arrival_distribution}
\end{align}
In particular, for $t \geq T$
\begin{align}
\lefteqn{%
\Pr\bigl[A(t_1) = k_1, A(t_2) = k_2, \ldots, A(t_m) = k_m\bigr]
}\qquad
\nonumber
\\
&
=
\Pr\bigl[\widehat{A}(t_1) = k_1, 
\widehat{A}(t_2) = k_2, \ldots, \widehat{A}(t_m) = k_m 
\mid \widehat{A}(t) = K\bigr]
\nonumber
\\
&=
\Pr\bigl[\widehat{A}(t_1) = k_1, 
\widehat{A}(t_2) = k_2, \ldots, \widehat{A}(t_m) = k_m 
\mid \widehat{A}(T) = K\bigr].
\label{eq:arrival-condition:t=T}
\end{align}
\end{lemma}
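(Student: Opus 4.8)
The plan is to reduce both sides of \eqref{eq:arrival_distribution} to identical multinomial distributions. The key observation is that the event $\{A(t_1)=k_1,\ldots,A(t_m)=k_m\}$ is determined solely by the numbers of arrivals falling into the subintervals of $[0,t]$ obtained by sorting the distinct values among $t_1,\ldots,t_m$. Set $s_0=0$, let $s_1<\cdots<s_{r-1}$ be the distinct values in $\{t_1,\ldots,t_m\}\cap(0,t)$, and put $s_r=t$, so that the subintervals $(s_{i-1},s_i]$ ($i=1,\ldots,r$) partition $[0,t]$. Writing $n_i$ for the count in $(s_{i-1},s_i]$, each constraint $A(t_j)=k_j$ becomes a partial-sum constraint $n_1+\cdots+n_{i(j)}=k_j$, so the event fixes $(n_1,\ldots,n_r)$ whenever the constraints are consistent (and otherwise both sides of \eqref{eq:arrival_distribution} are zero, so we may assume consistency). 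This reduction uses only that $A$ is a cumulative counting process, hence applies verbatim to $\widehat A$. Thus it suffices to show that, given $k$ total arrivals in $[0,t]$, the vector $(n_1,\ldots,n_r)$ has the same conditional law under the CTBP and under the NHPP.

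For the NHPP I would invoke the conditional-multinomial property of Poisson processes, which is immediate from the independent-increment and Poisson-increment structure assumed in the excerpt. Using $\Pr[\widehat A(s_i)-\widehat A(s_{i-1})=n_i]=\Poi(\Lambda(s_{i-1},s_i),n_i)$ together with independence of increments over the disjoint subintervals, the joint probability $\Pr[(n_1,\ldots,n_r),\ \widehat A(t)=k]$ factors into a product of Poisson pmf's; dividing by $\Poi(\Lambda(0,t),k)$ and simplifying the exponentials via $\sum_i\Lambda(s_{i-1},s_i)=\Lambda(0,t)$ yields
\[
\frac{k!}{n_1!\cdots n_r!}\prod_{i=1}^{r}\left(\frac{\Lambda(s_{i-1},s_i)}{\Lambda(0,t)}\right)^{n_i}.
\]

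For the CTBP I would argue from the i.i.d./exchangeable structure of the $K$ arrival times. Treating the $r$ subintervals of $[0,t]$ together with the complementary cell $(t,T]$ as a partition, the counts of the $K$ i.i.d. points across these $r+1$ cells form a multinomial with cell probabilities $F(s_0,s_1),\ldots,F(s_{r-1},s_r),\,1-F(0,t)$. Conditioning on the last cell holding $K-k$ points, which is exactly the event $A(t)=k$, collapses this to a multinomial over the first $r$ cells with the renormalized probabilities $F(s_{i-1},s_i)/F(0,t)$, giving
\[
\frac{k!}{n_1!\cdots n_r!}\prod_{i=1}^{r}\left(\frac{F(s_{i-1},s_i)}{F(0,t)}\right)^{n_i}.
\]
Since $\lambda(t)=\alpha f(t)$ forces $\Lambda(s_{i-1},s_i)/\Lambda(0,t)=F(s_{i-1},s_i)/F(0,t)$ by \eqref{eq:Lambda-F}, the two multinomials coincide, establishing \eqref{eq:arrival_distribution}.

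Finally, for \eqref{eq:arrival-condition:t=T} I would use that for $t\ge T$ the CTBP satisfies $A(t)=K$ almost surely, so the unconditional law equals the law conditioned on $A(t)=K$, which by \eqref{eq:arrival_distribution} equals the $\widehat A$-law conditioned on $\widehat A(t)=K$; and since $\lambda(\tau)=0$ for $\tau>T$ gives $\widehat A(t)=\widehat A(T)$ almost surely, conditioning on $\widehat A(t)=K$ is the same as conditioning on $\widehat A(T)=K$. I expect the main obstacle to be the CTBP computation: one must verify carefully that conditioning the $(r+1)$-cell multinomial on its complementary cell indeed produces the renormalized $r$-cell multinomial, and confirm that the deterministic map from $(k_1,\ldots,k_m)$ to the subinterval counts $(n_1,\ldots,n_r)$ is literally the same for $A$ and $\widehat A$, so that matching the two multinomials translates back into the claimed equality of conditional probabilities in \eqref{eq:arrival_distribution}.
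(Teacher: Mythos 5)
Your proposal is correct and follows essentially the same route as the paper's proof: both reduce the events to counts over the subintervals formed by the (sorted) time points, compute the CTBP conditional law as a renormalized multinomial with cell probabilities $F(t_{i-1},t_i)/F(0,t)$ and the NHPP conditional law as the corresponding multinomial with $\Lambda(t_{i-1},t_i)/\Lambda(0,t)$, and conclude from $\Lambda(s,t)=\alpha F(s,t)$, handling the $t\geq T$ case by $\Pr[A(t)=K]=1$ and $\lambda(\tau)=0$ for $\tau>T$. The "obstacle" you flag (conditioning the $(r+1)$-cell multinomial on its last cell) is exactly the explicit ratio computation the paper performs, so nothing is missing.
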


The proof of Lemma \ref{lem:distribution of cumulative arrivals} is
given in Appendix \ref{appendix:lem:distribution of cumulative
arrivals}.  

\red{Let $\widehat{X}_m$ denote the arrival time of the $m$-th customer in 
the NHPP; recall that $X_m$ denotes the arrival time of the $m$-th 
customer in the CTBP.}

\begin{corollary}\label{cor:distribution of cumulative arrivals}
For $(t,k) \in (0,T) \times \{\red{1,2,\ldots,K}\} \cup [T,\infty) \times
\{K\}$ and \red{$0 < x_1 \leq x_2 \leq \cdots \leq x_k \leq t$}, we have
\begin{align}
\lefteqn{%
\Pr[X_1 \leq x_1, X_2 \leq x_2, \ldots, X_k \leq x_k \mid A(t)=k]
}\qquad
\notag
\\
&=
\Pr[
\widehat{X}_1 \leq x_1, \widehat{X}_2 \leq x_2, \ldots,\widehat{X}_k \leq x_k
\mid 
\widehat{A}(t)=k
].
\label{coro:arrival-instants}
\end{align}
In particular, for $t \geq T$,
\begin{align}
\lefteqn{%
\Pr[X_1 \leq x_1, X_2 \leq x_2, \ldots, X_K \leq x_K ]
}\qquad
\notag
\\
&=
\Pr[
\widehat{X}_1 \leq x_1, \widehat{X}_2 \leq x_2, \ldots,\widehat{X}_K \leq x_K
\mid \widehat{A}(t)=K]
\nonumber
\\
&=
\Pr[
\widehat{X}_1 \leq x_1, \widehat{X}_2 \leq x_2, \ldots,\widehat{X}_K \leq x_K
\mid \widehat{A}(T)=K].
\label{coro:arrival-instants-2}
\end{align}
\end{corollary}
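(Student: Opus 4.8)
The plan is to reduce the event concerning the ordered arrival times to an event concerning the cumulative arrival process $A(\cdot)$, and then invoke Lemma~\ref{lem:distribution of cumulative arrivals} directly. The bridge between the two descriptions is the elementary order-statistics identity
\[
\{X_i \leq x_i\} = \{A(x_i) \geq i\},
\quad i = 1, 2, \ldots, k,
\]
which holds because $X_i$ is the $i$-th smallest arrival time and, by the orderliness property $\Pr[0 \leq X_1 < \cdots < X_K \leq T] = 1$, the event that at least $i$ customers have arrived by time $x_i$ coincides with the event $X_i \leq x_i$. The same identity holds verbatim for the NHPP, whose rate function $\lambda(t) = \alpha f(t)$ integrates to finite values and therefore produces an orderly point process with well-defined ordered arrival epochs $\widehat{X}_1 < \widehat{X}_2 < \cdots$.

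First I would rewrite the left-hand side of (\ref{coro:arrival-instants}) using this identity as
\[
\Pr\Bigl[\,\bigcap_{i=1}^{k} \{A(x_i) \geq i\} \;\Big|\; A(t) = k\Bigr],
\]
and then expand it as a sum of joint conditional pmf's
\[
\sum \Pr\bigl[A(x_1) = k_1, \ldots, A(x_k) = k_k \mid A(t) = k\bigr],
\]
where the sum runs over all tuples $(k_1, \ldots, k_k)$ with $i \leq k_i \leq k$. Tuples that violate the monotonicity of $A(\cdot)$ contribute zero automatically, so no further care about the index set is needed. Applying exactly the same rewriting to the right-hand side of (\ref{coro:arrival-instants}) gives the identical sum with $A$ replaced by $\widehat{A}$ and the conditioning event $A(t) = k$ replaced by $\widehat{A}(t) = k$. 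Lemma~\ref{lem:distribution of cumulative arrivals}, applied with $m = k$ and $t_j = x_j$, equates the two families of joint conditional pmf's term by term, so the two sums coincide and (\ref{coro:arrival-instants}) follows.

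For the ``in particular'' claim (\ref{coro:arrival-instants-2}), I would specialize the first part to $k = K$ and use that, for $t \geq T$, the CTBP satisfies $A(t) = K$ almost surely; hence the conditioning on $A(t) = K$ is vacuous and the unconditional probability equals the conditional one. The final equality, replacing the conditioning event $\widehat{A}(t) = K$ by $\widehat{A}(T) = K$, follows because $\lambda(\tau) = 0$ for $\tau > T$ implies $\widehat{A}(t) = \widehat{A}(T)$ almost surely for $t \geq T$, so the two conditioning events are identical.

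The only delicate point is establishing the order-statistics identity cleanly under the orderliness property: one must rule out ties among arrival times so that $X_i$ is unambiguously the $i$-th arrival and the equivalence $\{X_i \leq x_i\} = \{A(x_i) \geq i\}$ is exact rather than merely almost sure. Once orderliness is invoked---already guaranteed for the CTBP by the continuity of $F(0,\cdot)$ and for the NHPP by the finiteness of $\Lambda(0,\cdot)$---the remainder is a routine translation followed by a direct appeal to Lemma~\ref{lem:distribution of cumulative arrivals}.
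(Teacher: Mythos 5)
Your proof is correct and follows essentially the same route as the paper's: the paper likewise rewrites the event $\{X_i \leq x_i,\ i=1,\ldots,k\}$ as $\{A(x_i) \geq i,\ i=1,\ldots,k\}$, applies Lemma~\ref{lem:distribution of cumulative arrivals} to equate the conditional probabilities of these cumulative-count events (your explicit summation over tuples is just the detail the paper leaves implicit), and disposes of (\ref{coro:arrival-instants-2}) exactly as you do, via $\Pr[A(t)=K]=1$ for $t \geq T$ and the equivalence of $\{\widehat{A}(t)=K\}$ and $\{\widehat{A}(T)=K\}$.
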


\begin{proof}
It follows from Lemma \ref{lem:distribution of cumulative arrivals}
that
\begin{align*}
\lefteqn{%
\Pr[A(x_1) \geq 1, A(x_2) \geq 2, \ldots, A(x_k) \geq k \mid A(t)=k]
}\qquad
\\
&=
\Pr[\widehat{A}(x_1) \geq 1, \widehat{A}(x_2) \geq 2,
\ldots, \widehat{A}(x_k) \geq k \mid \widehat{A}(t)=k],
\end{align*}
which is equivalent to (\ref{coro:arrival-instants}).
The special case (\ref{coro:arrival-instants-2}) can be 
proved from (\ref{eq:arrival-condition:t=T}).
\qed
\end{proof}

Lemma \ref{lem:distribution of cumulative arrivals} and Corollary
\ref{cor:distribution of cumulative arrivals} imply that under the
condition that $A(t)=\widehat{A}(t)$ for some $t > 0$, the CTBP and
the NHPP with rate function $\lambda(t) = \alpha f(t)$ are
stochastically identical in the interval $[0,t]$.

\subsection{The auxiliary model and the time-dependent queue length distribution}

We now introduce the auxiliary model associated with the original
one. The arrival process in the auxiliary model is the NHPP with rate
function $\lambda(t) = \alpha f(t)$ ($\alpha > 0$).  Let
$\widehat{D}(t)$ denote the number of departures in the interval
$(0,t]$ in the auxiliary model and let $\widehat{L}(t)$ ($t \geq 0$)
denote the number of customers at time $t$. As in the original model,
we assume $\widehat{L}(0)=0$, so that
$\widehat{L}(t)=\widehat{A}(t)-\widehat{D}(t)$.  Furthermore, the
service mechanism in the auxiliary model satisfies Assumption
\ref{asm:service mechanism}, i.e., for $t \geq 0$, 
$1 \leq k < m$, and $j=0,1,\ldots,k$,
\begin{align*}
\lefteqn{%
\Pr[\widehat{D}(t)=j \mid \widehat{A}(t)=k, 
(\widehat{X}_1,\widehat{X}_2,\ldots, \widehat{X}_m)=(x_1,x_2,\ldots,x_m)]
}\qquad
\\
&=
\Pr[\widehat{D}(t)=j \mid \widehat{A}(t)=k, 
(\widehat{X}_1,\widehat{X}_2,\ldots, \widehat{X}_k)=(x_1,x_2,\ldots,x_k)].
\end{align*}

Lastly, we assume that the service mechanisms in the original
model and the auxiliary model are equivalent.  Under Assumption
\ref{asm:service mechanism}, the equivalence of the two service
mechanisms is defined formally as follows.

\begin{definition}
The service mechanisms in the original and auxiliary
models, both of which satisfy Assumption \ref{asm:service mechanism}, 
are said to be equivalent if
\begin{align}
\lefteqn{%
\Pr[D(t)=j \mid A(t)=k, (X_1,X_2,\ldots,X_k)=(x_1,x_2,\ldots,x_k)]
}\qquad
\nonumber
\\
&=
\Pr[\widehat{D}(t)=j  
\mid 
\widehat{A}(t)=k, (\widehat{X}_1,\widehat{X}_2,\ldots,\widehat{X}_k) 
= (x_1,x_2, \ldots, x_k)],
\label{eq:def-equivalence}
\end{align}
holds for all $t > 0$, $k=1,2,\ldots,K$, and $j=0,1,\ldots,k$. 
\end{definition}

In summary, the auxiliary model is such a queueing model that (i) it
has the same initial condition $\hat{L}(0)=0$ as in the original
model, (ii) customers arrive according to the NHPP with rate function
$\lambda(t)=\alpha f(t)$, (iii) it satisfies Assumption
\ref{asm:service mechanism}, and (iv) its service mechanism is
equivalent to that of the original model.

\begin{remark}
Assumption \ref{asm:service mechanism} is essential in the equivalence
of the service mechanisms. To see this, suppose $A(t)=\widehat{A}(t) =
k \leq K$ for some $t \in (0,T)$. We then have $\Pr[A(T)-A(t)=K-k]=1$
in the original model, while in the auxiliary model,
$\widehat{A}(T)-\widehat{A}(t)$ follows a Poisson distribution with
mean $\Lambda(t,T)$ independently of $\widehat{A}(t)=k$. Therefore, if
Assumption \ref{asm:service mechanism} did not hold in the original
model, it is hard to construct the auxiliary model with NHPP arrivals,
whose service mechanism is equivalent to the original one.
\end{remark}

\begin{lemma}\label{lem:service mechanisms}
For $k=0,1,\ldots,K$ and $j=0,1,\ldots,k$, we have
\begin{align}
\Pr[\widehat{D}(t)=j \mid \widehat{A}(t)=k]
&=
\Pr[D(t)=j \mid A(t)=k],
\quad
t > 0.
\label{eq:lem:service mechanisms}
\end{align}
\end{lemma}

\begin{proof}
\red{
For $k=0$, \eqref{eq:lem:service mechanisms} follows immediately from 
$\Pr[\widehat{D}(t)=0 \mid \widehat{A}(t)=0] 
= \Pr[D(t)=0 \mid A(t)=0] = 1$. 
We therefore focus on the case $k = 1,2,\ldots,K$.}

Let $\bm{x}_k = (x_1,x_2,\ldots,x_k)$. 
It then follows from (\ref{coro:arrival-instants}) 
and (\ref{eq:def-equivalence}) that
\begin{align*}
\lefteqn{%
\Pr[\widehat{D}(t)=j \mid \widehat{A}(t)=k]
}\quad
\\
&=
\int_{\bm{x}_k \in [0,t]^k}
\Pr[\widehat{D}(t)=j
\mid 
\widehat{A}(t)=k, (\widehat{X}_1, \widehat{X}_2,\ldots, \widehat{X}_k)
= \bm{x}_k]
\\
&\hspace{10em}
\dd \Pr[\widehat{X}_1 \leq x_1, 
\widehat{X}_2 \leq x_2, \ldots,\widehat{X}_k \leq x_k \mid \widehat{A}(t)=k]
\\
&=
\int_{\bm{x}_k \in [0,t]^k}
\Pr[D(t)=j
\mid 
A(t)=k, (X_1, X_2,\ldots,X_k) = \bm{x}_k]
\\
&\hspace{10em}
\dd \Pr[X_1 \leq x_1, X_2 \leq x_2, \ldots, X_k \leq x_k \mid A(t)=k]
\\
&=
\Pr[D(t)=j \mid A(t)=k].
\end{align*}
\qed 
\end{proof}

We now derive a formula for the time-dependent pmf $\pi_{\ell}(t,K)$
of the queue length in terms of the time-dependent joint pmf
$\hat{p}_{k,j}(t)$ of the numbers of arrivals and departures in the
auxiliary model.
\begin{align}
\hat{p}_{k,j}(t) 
&=
\Pr[\widehat{A}(t)=k, \widehat{D}(t)=j],
\quad
t \geq 0,\ k =0,1,\ldots,K,\ j =0,1,\ldots,k.
\label{eq:def-p_{k,j}}
\end{align}
Note here that 
\begin{equation}
\hat{p}_{k,k-\ell}(t) = \Pr[\widehat{A}(t)=k,\ \widehat{L}(t)=\ell], 
\quad 
t \geq 0,\ \ell=0,1,\ldots,\ \ k=\ell,\ell+1,\ldots,
\label{eq:p_{k,k-ell}}
\end{equation}
since $\widehat{L}(t) = \widehat{A}(t) - \widehat{D}(t)$.

\begin{theorem}\label{thm:distribution of the number of customers}
The time-dependent pmf $\pi_{\ell}(t,K)=\Pr[L(t)=\ell]$ of the queue
length in the original model is given by
\begin{align}
\pi_{\ell}(t,K)
&= 
\frac{\ds\sum_{k=\ell}^K \hat{p}_{k,k-\ell}(t) \Poi\big(\Lambda(t,T), K-k\big)}
{\ds\Poi\big(\Lambda(0,T), K\big)},
\quad
t \geq 0,\  \ell=0,1,\ldots,K,
\label{eq:main_result}
\end{align}
where $\Lambda(s,t)$ is given by \eqref{eq:Lambda(s,t)}.
\end{theorem}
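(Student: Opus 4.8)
The plan is to condition on the number of arrivals $A(t)=k$ and to exploit the fact that, once the arrival count by time $t$ is fixed, the departure behavior of the two models coincides. Since $L(t)=A(t)-D(t)$ with $D(t)\ge 0$ and $A(t)\le K$ almost surely, I would first write the decomposition
\begin{align*}
\pi_{\ell}(t,K)
&=\sum_{k=\ell}^{K}\Pr[A(t)=k,\,D(t)=k-\ell] \\
&=\sum_{k=\ell}^{K}\Pr[D(t)=k-\ell\mid A(t)=k]\,\Pr[A(t)=k].
\end{align*}
The equivalence of the service mechanisms enters through Lemma \ref{lem:service mechanisms}: for $k\le K$ one may replace the conditional departure probability by its auxiliary-model counterpart, $\Pr[D(t)=k-\ell\mid A(t)=k]=\Pr[\widehat{D}(t)=k-\ell\mid \widehat{A}(t)=k]$. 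Writing the latter as $\hat{p}_{k,k-\ell}(t)/\Pr[\widehat{A}(t)=k]$ turns each summand into $\hat{p}_{k,k-\ell}(t)\cdot \Pr[A(t)=k]/\Pr[\widehat{A}(t)=k]$, so it only remains to identify the arrival-count ratio with the Poisson factor appearing in \eqref{eq:main_result}.

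The crux is then a purely algebraic identity. Here $\Pr[A(t)=k]=\binom{K}{k}F(0,t)^k(1-F(0,t))^{K-k}$ is binomial, while $\Pr[\widehat{A}(t)=k]=\Poi(\Lambda(0,t),k)$ is Poisson, and I would use $\Lambda(0,t)=\alpha F(0,t)$ together with the normalizations $F(0,T)=1$, $\Lambda(0,T)=\alpha$, and $\Lambda(t,T)=\alpha\bigl(1-F(0,t)\bigr)$ to verify
\begin{align*}
\frac{\Pr[A(t)=k]}{\Pr[\widehat{A}(t)=k]}
=\frac{K!}{(K-k)!}\,\frac{(1-F(0,t))^{K-k}}{\alpha^{k}}\,e^{\alpha F(0,t)}
=\frac{\Poi\bigl(\Lambda(t,T),K-k\bigr)}{\Poi\bigl(\Lambda(0,T),K\bigr)}.
\end{align*}
The binomial factorials and the two Poisson exponentials must cancel cleanly, and this is precisely where the whole force of the coupling $\lambda(t)=\alpha f(t)$ (hence $\Lambda=\alpha F$) together with $F(0,T)=1$ is used; I expect this bookkeeping to be the main obstacle, since everything else is routine conditioning. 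Substituting the identity into each summand and summing over $k=\ell,\ldots,K$ then yields \eqref{eq:main_result}. The positivity required to divide by $\Pr[\widehat{A}(t)=k]=\Poi(\Lambda(0,t),k)$ for $t>0$ is guaranteed by the standing assumption $F(0,t)>0$.

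Finally I would check the boundary range $t\ge T$, where $\Lambda(t,T)=0$ and $A(t)=K$ almost surely. Interpreting $\Poi(0,K-k)=\mathbf{1}\{k=K\}$ collapses the sum to its $k=K$ term, and the formula reduces to $\hat{p}_{K,K-\ell}(t)/\Poi(\alpha,K)=\Pr[\widehat{D}(t)=K-\ell\mid \widehat{A}(t)=K]$, which equals $\Pr[D(t)=K-\ell]=\pi_{\ell}(t,K)$ by Lemma \ref{lem:service mechanisms} with $k=K$. The degenerate case $t=0$ is immediate, since $A(0)=\widehat{A}(0)=0$ forces only the $k=0$ term and both sides equal $\mathbf{1}\{\ell=0\}$.
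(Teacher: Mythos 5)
Your proof is correct, and it shares the paper's skeleton: both decompose $\pi_{\ell}(t,K)=\sum_{k=\ell}^{K}\Pr[A(t)=k]\Pr[D(t)=k-\ell\mid A(t)=k]$ and both invoke Lemma \ref{lem:service mechanisms} to replace the conditional departure probability by $\hat{p}_{k,k-\ell}(t)/\Pr[\widehat{A}(t)=k]$. Where you genuinely diverge is the arrival factor. The paper never touches the binomial pmf: it applies Lemma \ref{lem:distribution of cumulative arrivals} to write $\Pr[A(t)=k]=\Pr[\widehat{A}(t)=k\mid\widehat{A}(T)=K]$, then Bayes' rule and the independent-increments property of the NHPP to turn the resulting ratio into $\Pr[\widehat{A}(T)-\widehat{A}(t)=K-k]/\Pr[\widehat{A}(T)=K]$, which is exactly the Poisson ratio in \eqref{eq:main_result}. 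You instead bypass Lemma \ref{lem:distribution of cumulative arrivals} at this stage and verify the equivalent likelihood-ratio identity
\begin{align*}
\frac{\Pr[A(t)=k]}{\Pr[\widehat{A}(t)=k]}
=\frac{K!}{(K-k)!}\,\frac{\bigl(1-F(0,t)\bigr)^{K-k}}{\alpha^{k}}\,e^{\alpha F(0,t)}
=\frac{\Poi\bigl(\Lambda(t,T),K-k\bigr)}{\Poi\bigl(\Lambda(0,T),K\bigr)}
\end{align*}
by explicit computation with the binomial and Poisson pmfs; I checked the algebra and it is right, using $\Lambda=\alpha F$, $F(0,T)=1$, and $F(t,T)=1-F(0,t)$ for $t\leq T$. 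The paper's route buys conceptual transparency and reuse: the Poisson factor is exposed as a pure conditioning weight ($\widehat{A}(T)=K$ recovers the CTBP from the NHPP), the independent-increments step explains why only $K-k$ and $\Lambda(t,T)$ appear, and the machinery of Lemma \ref{lem:distribution of cumulative arrivals}---which is in any case needed upstream, since Lemma \ref{lem:service mechanisms} rests on Corollary \ref{cor:distribution of cumulative arrivals}---is exploited rather than re-derived in algebraic form. Your route buys a theorem-level argument that is self-contained given Lemma \ref{lem:service mechanisms} alone, and it makes the boundary cases $t\geq T$ (where $\Poi(0,K-k)$ collapses the sum to $k=K$) and $t=0$ explicit, which the paper's proof handles implicitly; the price is that the cancellation looks like a coincidence rather than a consequence of the coupling $\lambda(t)=\alpha f(t)$.
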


\begin{remark}\label{remark:distribution}
\red{The right-hand side of \eqref{eq:main_result} can be interpreted as 
the conditional probability that $\ell$ 
customers are \rrr{in the auxiliary model}
at time $t$, given that exactly $K$ arrivals 
occur during $[0,T]$. To see this, note that \eqref{eq:p_{k,k-ell}} 
implies that the term $\hat{p}_{k,k-\ell}(t)\,\Poi\big(\Lambda(t,T), K-k\big)$ 
in the numerator of \eqref{eq:main_result} represents the joint 
probability that (i) $k$ arrivals occur during $[0,t]$, (ii) $\ell$ of 
these customers are still in the system at time $t$, 
and (iii) additional $K-k$ arrivals occur during $(t,T]$. Furthermore, because no 
arrivals occur after time $T$, \eqref{eq:main_result} reduces for 
$t > T$ to}
\[
\pi_{\ell}(t,K)
= 
\frac{\hat{p}_{K,K-\ell}(t)}{\Poi\big(\Lambda(0,T), K\big)},
\qquad
t > T,\ \ell = 0,1,\ldots,K.
\]
\end{remark}

\noindent
\textit{Proof of Theorem \ref{thm:distribution of the number of customers} }
By definition, we have
\begin{align*}
\pi_{\ell}(t,K)
&= 
\Pr[L(t)=\ell]
=
\Pr[A(t)-D(t)=\ell]
\\
&= 
\sum_{k=\ell}^K \Pr[A(t)=k, D(t)=k-\ell]
\\
&= 
\sum_{k=\ell}^K \Pr[A(t)=k] \Pr[D(t)=k-\ell \mid A(t)=k].
\end{align*}
Because (cf.\ Lemmas \ref{lem:distribution of cumulative arrivals} 
and \ref{lem:service mechanisms})
\begin{align*}
\Pr[A(t)=k] 
&= 
\Pr[\widehat{A}(t)=k \mid \widehat{A}(T)=K],
\\
\Pr[D(t)=k-\ell \mid A(t)=k] 
&= 
\Pr[\widehat{D}(t)=k-\ell \mid \widehat{A}(t)=k],
\end{align*}
we have
\begin{align*}
\pi_{\ell}(t,K)
&= 
\sum_{k=\ell}^K 
\Pr[\widehat{A}(t)=k \mid \widehat{A}(T)=K] \Pr[\widehat{D}(t)=k-\ell
\mid \widehat{A}(t)=k]
\\
&= 
\sum_{k=\ell}^K
\frac{\Pr[\widehat{A}(t)=k,\widehat{A}(T)=K]}{\Pr[\widehat{A}(T)=K]}
\cdot
\frac{\Pr[\widehat{A}(t)=k,\widehat{D}(t)=k-\ell]}{\Pr[\widehat{A}(t)=k]}
\\
&= 
\sum_{k=\ell}^K
\frac{\Pr[\widehat{A}(t)=k,\widehat{A}(T)=K]}{\Pr[\widehat{A}(t)=k]}
\cdot
\frac{\hat{p}_{k,k-\ell}(t)}{\Pr[\widehat{A}(T)=K]}
\\
&= 
\sum_{k=\ell}^K
\Pr[\widehat{A}(T)=K \mid \widehat{A}(t)=k]
\cdot 
\frac{\hat{p}_{k,k-\ell}(t)}{\Pr[\widehat{A}(T)=K]}.
\end{align*}
Since the NHPP has the independent increments property, we have
\begin{align*}
\Pr[\widehat{A}(T)=K \mid \widehat{A}(t)=k]
= 
\Pr[\widehat{A}(T)-\widehat{A}(t)=K-k].
\end{align*}
We thus obtain
\[
\pi_{\ell}(t,K)
= 
\frac{\ds\sum_{k=\ell}^K \hat{p}_{k,k-\ell}(t)
\Pr\bigl[\widehat{A}(T)-\widehat{A}(t) = K-k\bigr]}
{\Pr\bigl[\widehat{A}(T)=K\bigr]},
\]
from which the theorem follows.
\qed

\section{A computational procedure for \rrr{piecewise} Markovian queues 
with piecewise constant pdf of arrival times}
\label{sec:application}

Theorem \ref{thm:distribution of the number of customers} shows that
the computation of $\pi_{\ell}(t,K)$ is reduced to the computation of
$\hat{p}_{k,k-\ell}(t)$'s. Unfortunately, however, the latter is not
easy in general.  We thus restrict our attention to 
\red{piecewise} Markovian queues \cite{Kuczura1973},
where system parameters, including the pdf
$f(t)$ of arrival times, are assumed to be piecewise constant in time.
\red{
In the rest of this paper, we simply refer to such piecewise Markovian 
systems as Markovian queues for brevity.}
For this special case, we consider a computational procedure for the
time-dependent queue length distribution $\pi_{\ell}(t,K)$ ($0 < t
\leq T$).

\subsection{Markovian model and uniformization}

We describe a general \rrr{piecewise} Markovian model considered in
this section. We partition the interval $(0,T]$ into $N$
($N=1,2,\ldots$) disjoint intervals $(T_{n-1},T_n]$
($n=1,2,\ldots,N$), where $T_0=0$ and $T_N=T$, and we assume
\[
f(t) = \gamma_n,
\quad 
t \in (T_{n-1},T_n],\ n=1,2,\ldots,N.
\]
It then follows from (\ref{eq:def-lambda(t)}) that 
\[
\lambda(t) = \alpha \gamma_n, 
\quad 
t \in (T_{n-1},T_n],\ n=1,2,\ldots,N.
\]
Recall that $f(t)=\lambda(t) = 0$ for $t \geq T$.
For simplicity in description, let $\lambda_n = \alpha\gamma_n$
($n=1,2,\ldots,N$). It then follows from \eqref{eq:Lambda(s,t)} that 
\begin{align*}
\Lambda(t,T) 
&= \lambda_n (T_n-t) 
+ 
\sum_{m=n+1}^{N} \lambda_m (T_m-T_{m-1}),
\nonumber
\\
& \hspace*{12em}
t \in (T_{n-1},T_{n}],\ n=1,2,\ldots,N,
\end{align*}
and $\Lambda(t,T)=0$ for $t > T$ (cf. (\ref{eq:Lambda(s,t)})).

We further assume that the auxiliary model in the interval
$(T_{n-1},T_n]$ ($n=1,2,\ldots,N$) can be formulated as a
homogeneous, continuous-time absorbing Markov chain
$\{(\widehat{A}(t), \widehat{D}(t), \widehat{S}(t))\}_{T_{n-1} < t
\leq T_n}$, where 
\red{$\widehat{A}(t) \in \{0,1,\ldots\}$, 
$\widehat{D}(t) \in \{0,1,\ldots,\widehat{A}(t)\}$}, and
$\widehat{S}(t)$ takes a value in a finite set
$\widehat{\mathcal{S}}$. Note that $\widehat{S}(t)$ contains
sufficient information to describe the system behavior. 
\red{
For example, in an ordinary Markovian queue with phase-type
service times, we have to retain the service phase when the server is
busy. Furthermore, in queueing models with server vacations or
customer retrials, it is necessary to keep track of additional
auxiliary states, such as the server’s vacation status and the number
of customers in orbit.
}

\red{
Since the computation of the right-hand side of \eqref{eq:main_result} 
requires only the joint probabilities 
$\hat{p}_{k,\ell}(t)=\Pr[\widehat{A}(t)=k,\, \widehat{D}(t)=\ell]$ for 
$k=0,1,\ldots,K$, we regard all states with $\widehat{A}(t) \geq K+1$ 
as absorbing states.  
We then aggregate all absorbing states
$\{(k,j,i);\; k \in \{K+1,K+2,\ldots\},\, j \in \{0,1,\ldots,k\},\,
i \in \widehat{\mathcal{S}}\}$
into a single absorbing state.  
With this aggregation, the infinitesimal generator 
$\widehat{\bm{Q}}_n$ of 
$\{(\widehat{A}(t), \widehat{D}(t), \widehat{S}(t))\}_{T_{n-1} < t \le T_n}$ 
can be written in the following form:}
\begin{align}
\widehat{\bm{Q}}_n 
= 
\begin{bmatrix}
\bm{Q}_n & \bm{q}_n\\
\bm{0} & 0
\end{bmatrix}.
\label{eq:transition rate matrix}
\end{align}
\red{
Note that \rrr{$\bm{q}_n = (-\bm{Q}_n) \bm{e}$}, where $\bm{e}$ denotes the 
column vector of ones \rrr{with an appropriate dimension}.  
From the construction, $\bm{Q}_n$ represents the (defective) infinitesimal 
generator of the transient portion of the absorbing Markov chain 
$\{(\widehat{A}(t), \widehat{D}(t), \widehat{S}(t))\}_{T_{n-1} < t \le T_n}$ 
\rrr{and} $\bm{q}_n$ represents the transition rates to the absorbing state.
}

Let $\hat{\bm{p}}(t)$ ($t \geq 0$) denote a row vector whose $(k,j,i)$-th
element $\hat{p}_{k,j,i}(t)$ represents the time-dependent probability
of transient state $(k,j,i)$.
\begin{align*}
\hat{p}_{k,j,i}(t) 
&=
\Pr[\widehat{A}(t)=k, \widehat{D}(t)=j, \widehat{S}(t)=i],
\notag
\\
& \hspace*{8em}
k \in \{0,1,\ldots,K\},\ j \in \{0,1,\ldots,k\},\ 
i \in \widehat{\mathcal{S}}.
\end{align*}
\rrr{%
We then have
\begin{align}
\hat{\bm{p}}(t) 
&= 
\hat{\bm{p}}(T_{n-1})\exp[\bm{Q}_n (t-T_{n-1})],
\quad
t \in (T_{n-1},T_n],\ n=1,2,\ldots,N,
\label{eq:matrix-exponential-term}
\end{align}
with an initial state probability vector $\hat{\bm{p}}(0)$.
Note here that $\hat{p}_{k,j}(t)$ in (\ref{eq:def-p_{k,j}}) 
is given by
\[
\hat{p}_{k,j}(t) 
=
\sum_{i \in \widehat{\mathcal{S}}} \hat{p}_{k,j,i}(t),
\quad
k=0,1,\ldots,K,\ j=0,1,\ldots,k,
\]
which implies
\[
\sum_{k=0}^{K} 
\sum_{j=0}^{k} 
\sum_{i \in \widehat{\mathcal{S}}}
\hat{p}_{k,j,i}(t)
=
1 - \Pr[\widehat{A}(t) \geq K+1].
\]
Therefore, the computation of the pmf $\pi_{\ell}(t,K)$ in
(\ref{eq:main_result}) is reduced to the computation of 
$\hat{\bm{p}}(t)$ in (\ref{eq:matrix-exponential-term}).
}

For efficient computation of the matrix exponential terms \rrr{in
  (\ref{eq:matrix-exponential-term}),} we use the uniformization
technique \cite[pp. 154--156]{Tijms1994} as follows:
\begin{align}
\hat{\bm{p}}(t) 
&= 
\sum_{m=0}^{\infty}
\Poi(\theta_n (t-T_{n-1}),m)
\hat{\bm{p}}(T_{n-1}) \bm{P}_n^m,
\notag
\\ 
&\hspace{10em}
t \in (T_{n-1},T_n],\ n=1,2,\ldots,N,
\label{eq:uniformization}
\end{align}
where $\theta_n$ denotes the maximum of absolute values of diagonal elements 
in $\bm{Q}_n$,
\[
\theta_n = \max_{i}\left| (\bm{Q}_n)_{i,i} \right|,
\quad
n=1,2,\ldots,N,
\]
and $\bm{P}_n$ denotes a sub-stochastic matrix given by 
\[
\bm{P}_n = \bm{I} + \theta_n^{-1} \bm{Q}_n.
\]

\subsection{Truncation error bound and computational procedure}

In numerical computation, the infinite sum in
(\ref{eq:uniformization}) should be truncated. Specifically, let
\begin{equation}
\hat{\bm{p}}^{\trunc}(T_0) = \hat{\bm{p}}(0),
\label{eq:trunc-initial}
\end{equation}
and we define $\hat{\bm{p}}^{\trunc}(t)$ ($t \in (0,T]$) 
recursively as
\begin{align}
\hat{\bm{p}}^{\trunc}(t)
&= 
\sum_{m=0}^{M_n} 
\Poi(\theta_n (t-T_{n-1}),m)
\hat{\bm{p}}^{\trunc}(T_{n-1}) \bm{P}_n^m,
\notag
\\
&\hspace{10em}
t \in (T_{n-1},T_n],\ n=1,2,\ldots,N,
\label{eq:computed value of transient state probability}
\end{align}
\red{for appropriately chosen $M_n$ ($n = 1,2,\ldots,N$),
as discussed later.
}
Note here that $\hat{\bm{p}}(0) \bm{e}=1$.

Let $\hat{p}_{k,j,i}^{\trunc}(t)$ ($t \geq 0$, $k=0,1,\ldots,K$,
$j=0,1,\ldots,k$, $i \in \widehat{\mathcal{S}}$) denote the
$(k,j,i)$-th element of $\hat{\bm{p}}^{\trunc}(t)$. 
\rrr{It is clear that
\begin{equation}
\hat{p}_{k,j,i}^{\trunc}(t)
\leq 
\hat{p}_{k,j,i}(t),
\quad
t \geq 0,\ k=0,1,\ldots,K.\ j=0,1,\ldots,k,\ i \in \widehat{\mathcal{S}}.
\label{eq:trunc<original}
\end{equation}}
We define $\hat{p}_{k,j}^{\trunc}(t)$ as
\[
\hat{p}_{k,j}^{\trunc}(t)
=
\sum_{i \in \widehat{\mathcal{S}}} \hat{p}_{k,j,i}^{\trunc}(t),
\quad
t \geq 0,\ k=0,1,\ldots,K,\ j=0,1,\ldots,k.
\]
Furthermore, with $\hat{p}_{k,j}^{\trunc}(t)$, we define
$\pi_{\ell}^{\trunc}(t,K)$ as \red{(cf. \eqref{eq:main_result})}
\begin{align}
\pi_{\ell}^{\trunc}(t,K) 
&= 
\frac{\ds\sum_{k=\ell}^K \hat{p}_{k,k-\ell}^{\trunc}(t) 
\Poi\big(\Lambda(t,T),K-k\big)}
{\Poi\big(\Lambda(0,T),K\big)},
\notag
\\
&\hspace{10em} 
t \in (T_{n-1},T_n],\ n=1,2,\ldots,N,
\label{eq:computed value of distribution of the number of customers}
\end{align}
where $\Lambda(s,t)$ ($0 \leq s <t$) is given by (\ref{eq:Lambda(s,t)}).

We define $\bm{\pi}(t,K)$ and $\bm{\pi}^{\trunc}(t,K)$ 
(\red{$0 < t \leq T$}) as 
\begin{align*}
\bm{\pi}(t,K) 
&=
[\pi_0(t,K),\pi_1(t,K),\ldots,\pi_K(t,K)],
\\
\bm{\pi}^{\trunc}(t,K) 
&=
[\pi^{\trunc}_0(t,K),\pi^{\trunc}_1(t,K),\ldots,\pi^{\trunc}_K(t,K)].
\end{align*}
Our goal is to set $M_n$
($n=1,2,\ldots,N$) in such a way that for a predefined small $\epsilon
> 0$,
\[
\| \bm{\pi}(t,K) - \bm{\pi}^{\trunc}(t,K)\|_1 < \epsilon.
\]
Because $\pi_{\ell}(t,K) \geq \pi_{\ell}^{\trunc}(t,K) \geq 0$ 
($\ell=0,1,\ldots,K$), we have
\begin{align}
\| \bm{\pi}(t,K) - \bm{\pi}^{\trunc}(t,K)\|_1 
&=
\bm{\pi}(t,K)\bm{e} - \bm{\pi}^{\trunc}(t,K)\bm{e}
\nonumber
\\
&=
1 
-
\sum_{\ell=0}^K 
\frac{\ds\sum_{k=\ell}^K \hat{p}_{k,k-\ell}^{\trunc}(t) \Poi\big(\Lambda(t,T),K-k\big)}
{\Poi\big(\Lambda(0,T),K\big)}
\nonumber
\\
&=
1
-
\sum_{k=0}^K
\frac{\Poi\big(\Lambda(t,T),K-k\big)}
{\Poi\big(\Lambda(0,T),K\big)}
\sum_{\ell=0}^k \hat{p}_{k,k-\ell}^{\trunc}(t)
\nonumber
\\
&=
1
-
\sum_{k=0}^K
\frac{\Poi\big(\Lambda(t,T),K-k\big)}
{\Poi\big(\Lambda(0,T),K\big)}
\hat{p}_k^{\trunc}(t),
\quad
\red{0 < t \leq T}
\label{eq:1-norm}
\end{align}
where
\[
\hat{p}_k^{\trunc}(t)
=
\sum_{j=0}^k \hat{p}_{k,j}^{\trunc}(t),
\quad
\red{0 < t \leq T},\ k=0,1,\ldots,K.
\]
\red{
Note that in the absence of truncation,
\begin{equation}
\hat{p}_k(t) 
= 
\sum_{j=0}^{k} \hat{p}_{k,j}(t)
=
\Pr[\widehat{A}(t) = k], 
\quad
0 < t \leq T,\ k=0,1,\ldots,K.
\label{eq:hat{p}_k(t)}
\end{equation}}
\begin{lemma}\label{lem:error 1 part}
\red{
Suppose that for a given $\epsilon_0 \in (0,1]$,
the truncation points $M_n$ ($n=1,2,\ldots,N$) are chosen such that
\begin{align}
\sum_{m=0}^{M_n-K}
\Poi\bigl(\theta_n(T_n-T_{n-1}),m\bigr)
> 1-\epsilon_0,
\quad 
n = 1,2,\ldots,N.
\label{eq:truncation points 1 part}
\end{align}
We then have}
\begin{align}
&
(1-\epsilon_0)^n \Pr[\widehat{A}(t)=k] 
< 
\hat{p}_k^{\trunc}(t) 
\leq
\Pr[\widehat{A}(t)=k],
\nonumber
\\
&\hspace{10em}
k=0,1,\ldots,K,\ t \in (T_{n-1},T_n],\ n=1,2,\ldots,N.
\label{eq:lem:error}
\end{align}
\end{lemma}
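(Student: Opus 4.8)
The plan is to prove both inequalities in \eqref{eq:lem:error} simultaneously by induction on the interval index $n$, with the base case supplied by the exact initial condition $\hat{\bm{p}}^{\trunc}(T_0)=\hat{\bm{p}}(0)$ from \eqref{eq:trunc-initial}. The upper bound is the easy half: since $\hat{\bm{p}}(0)$ is a probability vector and $\bm{P}_n$ has nonnegative entries, every term dropped in passing from \eqref{eq:uniformization} to \eqref{eq:computed value of transient state probability} is nonnegative, and a smaller nonnegative input vector produces a smaller output; hence $\hat{\bm{p}}^{\trunc}(t)\le\hat{\bm{p}}(t)$ componentwise for all $t$, which yields $\hat{p}_k^{\trunc}(t)\le\Pr[\widehat{A}(t)=k]$ after summing over $j$ and $i$. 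The real work is the lower bound, and the whole argument rests on one structural observation about the uniformized chain, described next.

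Because the NHPP arrival rate $\lambda_n$ is the same from every transient state, each uniformization step is an arrival with the state-independent probability $r_n=\lambda_n/\theta_n$; consequently the arrival indicators attached to the steps are i.i.d.\ Bernoulli$(r_n)$, and the number of arrivals among $m$ steps is Binomial$(m,r_n)$, independent of the service and phase dynamics. Since an arrival raises $\widehat{A}$ by exactly one and the only absorbing transition is an arrival out of level $K$, reaching a transient state with $\widehat{A}=k\le K$ from an initial level $k_0$ after $m$ steps is \emph{equivalent} to seeing exactly $k-k_0$ arrivals among those steps, with no absorption occurring along the way. I would use this to show that $\hat{p}_k^{\trunc}(t)$ depends on $\hat{\bm{p}}^{\trunc}(T_{n-1})$ only through its level sums $\hat{p}_{k_0}^{\trunc}(T_{n-1})$, namely
\begin{align*}
\hat{p}_k^{\trunc}(t)
=
\sum_{k_0=0}^{k}
\hat{p}_{k_0}^{\trunc}(T_{n-1})
\sum_{m=0}^{M_n}
\Poi(\theta_n\tau_n,m)
\binom{m}{k-k_0}
r_n^{k-k_0}(1-r_n)^{m-(k-k_0)},
\end{align*}
where $\tau_n=t-T_{n-1}$; the same computation with the infinite sum gives the exact relation $\Pr[\widehat{A}(t)=k]=\sum_{k_0=0}^{k}\Pr[\widehat{A}(T_{n-1})=k_0]\,\Poi(\lambda_n\tau_n,k-k_0)$, which is just the Poisson-increment property of $\widehat{A}$.

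Next I would evaluate the inner coefficient by Poisson--Binomial splitting: writing $d=k-k_0$ and using $\theta_n r_n=\lambda_n$, each product $\Poi(\theta_n\tau_n,m)\binom{m}{d}r_n^{d}(1-r_n)^{m-d}$ factors as $\Poi(\lambda_n\tau_n,d)\,\Poi((\theta_n-\lambda_n)\tau_n,m-d)$, so the truncated inner sum equals $\Poi(\lambda_n\tau_n,d)\sum_{j=0}^{M_n-d}\Poi((\theta_n-\lambda_n)\tau_n,j)$. This is where the $-K$ in the hypothesis \eqref{eq:truncation points 1 part} enters: since $d=k-k_0\le K$, lowering the upper limit from $M_n-d$ to $M_n-K$ can only decrease the residual factor, and since the Poisson tail sum $\sum_{j=0}^{M}\Poi(\mu,j)$ is decreasing in the mean $\mu$, replacing $(\theta_n-\lambda_n)\tau_n$ by the larger $\theta_n\tau_n$ decreases it further; the resulting lower bound is exactly the left-hand side of \eqref{eq:truncation points 1 part} and hence exceeds $1-\epsilon_0$. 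This gives the per-term estimate (truncated inner sum) $>(1-\epsilon_0)\,\Poi(\lambda_n\tau_n,d)$, uniformly in $k_0$.

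Finally I would assemble the induction. Substituting the per-term estimate into the displayed expression for $\hat{p}_k^{\trunc}(t)$ and invoking the induction hypothesis $\hat{p}_{k_0}^{\trunc}(T_{n-1})>(1-\epsilon_0)^{n-1}\Pr[\widehat{A}(T_{n-1})=k_0]$ (an equality at the base case $n=1$), the sum collapses, via the exact Poisson-increment relation above, to $(1-\epsilon_0)^{n}\Pr[\widehat{A}(t)=k]$, completing the step. I expect the main obstacle to be a fully rigorous justification of the structural observation in the second paragraph---that the arrival indicators are i.i.d.\ Bernoulli$(r_n)$ independent of the service dynamics and that no absorption interferes when the terminal level is $k\le K$---since this is precisely what both reduces the dependence on $\hat{\bm{p}}^{\trunc}(T_{n-1})$ to its level sums and supplies the clean factorization; once it is in place, the remaining steps are the splitting identity and two elementary monotonicity facts for Poisson tails.
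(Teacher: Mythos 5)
Your proposal is correct and follows essentially the same route as the paper's own proof: the same reduction of the uniformized chain to a discrete-time pure birth process with state-independent arrival probability $\lambda_n/\theta_n$ (the paper's Fig.~\ref{fig:state transition diagram}), the same Poisson--binomial splitting of $\Poi(\theta_n\tau_n,m)$ into $\Poi(\lambda_n\tau_n,\cdot)\,\Poi((\theta_n-\lambda_n)\tau_n,\cdot)$, the same two monotonicity bounds (lowering the truncation limit to $M_n-K$ and enlarging the Poisson mean) to extract the factor $(1-\epsilon_0)$, and the same induction over the intervals $(T_{n-1},T_n]$. The only differences are presentational: you state explicitly the monotonicity of the Poisson distribution function in its mean and the recursive (componentwise-monotone) nature of the upper bound, both of which the paper leaves implicit.
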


\begin{proof}
\rrr{The second inequality in (\ref{eq:lem:error}) follows from 
(\ref{eq:trunc<original}) and (\ref{eq:hat{p}_k(t)}).
We thus 
consider the first inequality in \eqref{eq:lem:error} below.}

\red{
If we focus only on the number of arrivals, the uniformized Markov 
chain is reduced to a simple discrete-time birth process whose state 
transition diagram is given by Fig. \ref{fig:state transition diagram}.
By conditioning on both the cumulative number of arrivals up to time 
$T_{n-1}$ and the number of state transitions occurring during the 
interval $(T_{n-1},t]$, we obtain the following representation: 
\begin{align*}
\hat{p}_k(t)
&=
\Pr[\widehat{A}(t) = k]
\\
&=
\sum_{h=0}^{k}
\Pr[\widehat{A}(T_{n-1}) = h]
\Pr[\widehat{A}(t) - \widehat{A}(T_{n-1}) = k-h]
\\
&= 
\sum_{h=0}^{k} \hat{p}_h(T_{n-1})
\sum_{m=k-h}^{\infty} \Poi(\theta_n(t-T_{n-1}), m)\\
&\hspace{11em} \cdot
\binom{m}{k-h} 
\left(\frac{\lambda_n}{\theta_n}\right)^{k-h}
\left(1 - \frac{\lambda_n}{\theta_n}\right)^{m-k+h},
\\
&\hspace{8em}
k=0,1,\ldots,K,\ t \in (T_{n-1},T_n],\ n=1,2,\ldots,N.
\end{align*}
Moreover, from the correspondence between (\ref{eq:uniformization}) 
and (\ref{eq:computed value of transient state probability}), 
$\hat{p}_k^\trunc(t)$ is given by}
\begin{align}
\hat{p}_k^{\trunc}(t)
&= 
\sum_{h=0}^k \hat{p}_h^{\trunc}(T_{n-1})
\sum_{m=k-h}^{M_n} \Poi\bigl(\theta_n(t-T_{n-1}),m\bigr)
\nonumber
\\
&\hspace{12em}\cdot
\binom{m}{k-h} 
\left(\frac{\lambda_n}{\theta_n}\right)^{k-h}
\left(\frac{\theta_n-\lambda_n}{\theta_n}\right)^{m-k+h}
\nonumber
\\
&= 
\sum_{h=0}^k \hat{p}_h^{\trunc}(T_{n-1})
\sum_{m=k-h}^{M_n}
\Poi\bigl(\lambda_n (t-T_{n-1}),k-h\bigr)
\nonumber
\\
&\hspace{12em}
\cdot
\Poi\bigl((\theta_n-\lambda_n)(t-T_{n-1}),m-k+h \bigr)
\nonumber
\\
&= 
\sum_{h=0}^k \hat{p}_h^{\trunc}(T_{n-1})
\Poi\bigl(\lambda_n (t-T_{n-1}),k-h \bigr)
\nonumber
\\
&\hspace{6em}
\cdot
\sum_{m=0}^{M_n-k+h}
\Poi\bigl( (\theta_n-\lambda_n)(t-T_{n-1}),m\bigr)
\nonumber
\\
&\geq 
\sum_{h=0}^k \hat{p}_h^{\trunc}(T_{n-1})
\Poi\bigl(\lambda_n (t-T_{n-1}),k-h \bigr)
\nonumber
\\
&\hspace{6em}
\cdot
\sum_{m=0}^{M_n-K}
\Poi\bigl( (\theta_n-\lambda_n)(t-T_{n-1}),m\bigr),
\nonumber
\\
& \hspace{5em}
k=0,1,\ldots,K,\ t \in (T_{n-1},T_n],\ n=1,2,\ldots,N.
\label{eq:lem:error-proof}
\end{align}
\blue{
\rrr{Because} $(\theta_n - \lambda_n)(t - T_{n-1}) \le \theta_n (T_n -
T_{n-1})$ holds for any $t \in (T_{n-1},T_n]$, we have from
the stochastic ordering of Poisson distributions 
\cite[Theorem 1.A.13]{Shaked2007},
\begin{align}
\sum_{m=0}^{M_n-K}
\Poi((\theta_n - \lambda_n)(t - T_{n-1}),m)
&\geq
\sum_{m=0}^{M_n-K}
\Poi(\theta_n (T_n - T_{n-1}),m)
\nonumber
\\
&>
1 - \epsilon_0,
\label{eq:Poisson-ordering}
\end{align}
\rrr{where the second inequality follows from 
\eqref{eq:truncation points 1 part}.}}
Therefore, it follows from \eqref{eq:lem:error-proof} 
and \eqref{eq:Poisson-ordering} that
\begin{align}
\hat{p}_k^{\trunc}(t)
& >
(1-\epsilon_0) 
\sum_{h=0}^k \hat{p}_h^{\trunc}(T_{n-1})
\Poi\bigl(\lambda_n (t-T_{n-1}),k-h \bigr)
\nonumber
\\
&= 
(1-\epsilon_0) 
\sum_{h=0}^k \hat{p}_h^{\trunc}(T_{n-1})
\Pr[\widehat{A}(t)=k \mid \widehat{A}(T_{n-1})=h],
\nonumber
\\
& \hspace{5em}
k=0,1,\ldots,K,\ t \in (T_{n-1},T_n],\ n=1,2,\ldots,N.
\label{eq:lem:error-main}
\end{align}

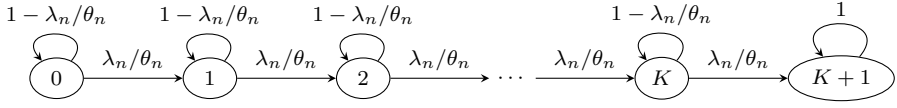
\begin{figure}[!t]
\centering
\begin{tikzpicture}%
[node/.style={draw, ellipse, minimum width=20pt, minimum height=15pt}]
\node[node](0){$0$};
\node[node, right = 1.3cm of 0](1){$1$};
\node[node, right = 1.3cm of 1](2){$2$};
\node[right = 1.3cm of 2](d){$\cdots$};
\node[node, right = 1.3cm of d](K){$K$};
\node[node, right = 1.3cm of K, font=\small](K+1){$K+1$};
\path[->, >=stealth]
(0) edge[above] node{$\lambda_n/\theta_n$} (1)
(1) edge[above] node{$\lambda_n/\theta_n$} (2)
(2) edge[above] node{$\lambda_n/\theta_n$} (d)
(d) edge[above] node{$\lambda_n/\theta_n$} (K)
(K) edge[above] node{$\lambda_n/\theta_n$} (K+1)
(0) edge[loop above, in=130, out=50, looseness=4] node{$1-\lambda_n/\theta_n$} (0)
(1) edge[loop above, in=130, out=50, looseness=4] node{$1-\lambda_n/\theta_n$} (1)
(2) edge[loop above, in=130, out=50, looseness=4] node{$1-\lambda_n/\theta_n$} (2)
(K) edge[loop above, in=130, out=50, looseness=4] node{$1-\lambda_n/\theta_n$} (K)
(K+1) edge[loop above, in=130, out=50, looseness=4] node{$1$} (K+1);
\end{tikzpicture}
\caption{The state transition diagram for the discrete-time Markov chain 
obtained by uniformization, focusing only on the number of arrivals, where  
$K+1$ represents the aggregated absorbing state with more than $K$ arrivals.}
\label{fig:state transition diagram}
\end{figure}

Using \eqref{eq:lem:error-main}, we prove the first inequality 
in (\ref{eq:lem:error}) by induction. 
Since (cf.\ (\ref{eq:trunc-initial}))
\[
\hat{p}_k^{\trunc}(T_0) = \begin{cases}
1, & k=0, \\
0, & \mbox{otherwise},
\end{cases}
\]
we have from (\ref{eq:lem:error-main}),
\[
\hat{p}_k^{\trunc}(t) > (1-\epsilon_0) \Pr[\widehat{A}(t)=k],
\quad
k=0,1,\ldots,K,\ t \in (0,T_1],
\]
so that (\ref{eq:lem:error}) holds for $n=1$. 
We now assume that 
\begin{align}
\hat{p}_k^{\trunc}(t) 
&> 
(1-\epsilon_0)^n \Pr[\widehat{A}(t)=k],
\quad
k=0,1,\ldots,K,\ t \in (T_{n-1},T_n],
\label{eq:induction-1}
\end{align}
holds for $n = m-1$ ($m \geq 2$). It then follows from
(\ref{eq:lem:error-main}) that
for $t \in (T_{m-1},T_m]$
\red{%
\begin{align*}
\hat{p}_k^{\trunc}(t)
& >
(1-\epsilon_0) 
\sum_{h=0}^k \hat{p}_h^{\trunc}(T_{m-1})
\Pr[\widehat{A}(t)=k \mid \widehat{A}(T_{m-1})=h]
\nonumber
\\
& >
(1-\epsilon_0) 
\sum_{h=0}^k (1-\epsilon_0)^{m-1} 
\Pr[\widehat{A}(T_{m-1})=h]
\\ &\hspace{15em} \cdot
\Pr[\widehat{A}(t)=k \mid \widehat{A}(T_{m-1})=h]
\nonumber
\\
& =
(1-\epsilon_0)^m
\Pr[\widehat{A}(t)=k],
\qquad
k=0,1,\ldots,K,
\end{align*}}
so that (\ref{eq:induction-1}) also holds for $n=m$, 
which completes the proof.
\qed
\end{proof}

\begin{theorem}\label{thm:error}
For a given $\epsilon$ $(\epsilon \in (0,1])$, 
if the truncation points $M_1, M_2, \ldots, M_N$ satisfy
\begin{equation}
\sum_{m=0}^{M_n-K}\Poi\bigl(\theta_n (T_n - T_{n-1}),m\bigr)
> 
(1-\epsilon)^{\frac{1}{N}},
\quad
n = 1,2,\ldots,N,
\label{eq:truncation points}  
\end{equation}
we have
\begin{equation}
\|\bm{\pi}(t,K)-\bm{\pi}^{\trunc}(t,K)\|_1 < \epsilon,
\label{eq:pi-error-bound}
\end{equation}
for all $t \in (0,T]$.
\end{theorem}

\begin{proof}
\red{
From \eqref{eq:truncation points} and Lemma \ref{lem:error 1 part},
we have for $t \in (T_{n-1}, T_n]$ ($n = 1,2,\ldots,N$), 
\begin{equation}
\hat{p}_k^\trunc(t) > (1-\epsilon)^{\frac{n}{N}}\Pr[\widehat{A}(t) = k].
\label{eq:hat-p-trunc-err}
\end{equation}
Therefore, using \eqref{eq:1-norm} and \eqref{eq:hat-p-trunc-err},
we obtain \eqref{eq:pi-error-bound} as follows:
\begin{align*}
\lefteqn{%
\| \bm{\pi}(t,K) - \bm{\pi}^{\trunc}(t,K)\|_1 
}\hspace*{6em}
\\
& <
1
-
(1-\epsilon)^{\frac{n}{N}}
\sum_{k=0}^K
\frac{\Poi\big(\Lambda(t,T),K-k\big)}
{\Poi\big(\Lambda(0,T),K\big)}
\Pr[\widehat{A}(t)=k]
\\
& \leq
1
-
(1-\epsilon)
\sum_{k=0}^K
\frac{\Poi\big(\Lambda(0,t),k\big) \Poi\big(\Lambda(t,T),K-k\big)}
{\Poi\big(\Lambda(0,T),K\big)}
\\
&=
1
-
(1-\epsilon)
\frac{\Poi\big(\Lambda(0,T),K\big)}
{\Poi\big(\Lambda(0,T),K\big)}
= \epsilon,
\end{align*}
where in the second inequality, we \rrr{use}
\[
(1-\epsilon)^{\frac{n}{N}} \geq 1-\epsilon,
\quad
n = 1,2,\ldots,N.
\]
}
\qed
\end{proof}

The computational procedure for $\bm{\pi}^{\trunc}(t,K)$ ($t \in
(0,T]$) is presented in Fig.\ \ref{fig:procedure}, 
where $N^*(t)$ denotes the natural number such that
\[
T_{N^*(t)-1} < t \leq T_{N^*(t)}.
\]

\begin{figure}
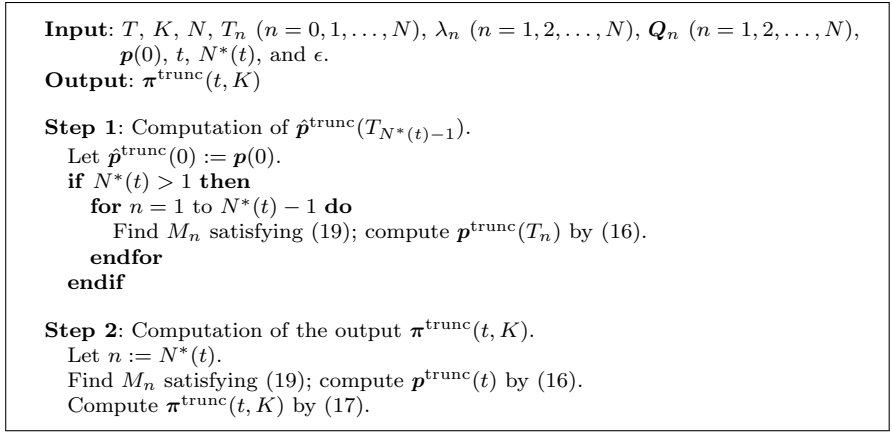

\centering
\noindent
\begin{small}
\fbox{\hspace*{3mm}
\begin{minipage}{110mm}
\mbox{}
\\
\textbf{Input}: 
$T$, $K$, $N$, $T_n$ ($n=0,1,\ldots,N$), 
$\lambda_n$ ($n=1,2,\ldots,N$), 
$\bm{Q}_n$ ($n=1,2,\ldots,N$), \\
\hspace*{3em}
$\bm{p}(0)$, $t$, $N^*(t)$, and 
$\epsilon$.
\\
\textbf{Output}: $\bm{\pi}^{\trunc}(t,K)$
\\[3mm]
\textbf{Step 1}: Computation of $\hat{\bm{p}}^{\trunc}(T_{N^*(t)-1})$.
\\
\q Let $\hat{\bm{p}}^{\trunc}(0) := \bm{p}(0)$.
\\ 
\q \textbf{if} $N^*(t) > 1$ \textbf{then}
\\
\q\q \textbf{for} $n=1$ to $N^*(t)-1$ \textbf{do}
\\
\q\q\q Find $M_n$ satisfying (\ref{eq:truncation points});\ 
compute $\bm{p}^{\trunc}(T_n)$ by 
(\ref{eq:computed value of transient state probability}).
\\
\q\q \textbf{endfor}
\\
\q \textbf{endif}
\\[3mm]
\textbf{Step 2}: Computation of the output $\bm{\pi}^{\trunc}(t,K)$.
\\
\q Let $n := N^*(t)$.
\\
\q Find $M_n$ satisfying (\ref{eq:truncation points});\ 
compute $\bm{p}^{\trunc}(t)$ by 
(\ref{eq:computed value of transient state probability}).
\\
\q Compute $\bm{\pi}^{\trunc}(t,K)$ by 
(\ref{eq:computed value of distribution of the number of customers}).
\\[-2mm]
\mbox{}
\end{minipage}}
\end{small}
\caption{Computational Procedure for 
$\bm{\pi}^{\trunc}(t,K)$ ($t \in (0,T]$).}
\label{fig:procedure}
\end{figure}

Although $A(T)=K$ and no customers arrive after time $T$, some
customers may remain in the system at time $t$, i.e., $\Pr[L(T) \geq 1] > 0$.
Suppose the auxiliary model after time $T$ is also formulated as a
continuous-time Markov chain $\{(\widehat{A}(t), \widehat{D}(t),
\widehat{S}(t))\}_{t > T}$ whose infinitesimal generator
$\widehat{\bm{Q}}_{N+1}$ takes the following form.
\[
\widehat{\bm{Q}}_{N+1} 
= 
\begin{bmatrix}
\bm{Q}_{N+1} & \bm{q}_{N+1}\\
\bm{0} & 0
\end{bmatrix}.
\]
Note here that all states with $\widehat{D}(t)=K$ are absorbing and
they are aggregated into a single absorbing state in
$\widehat{\bm{Q}}_{N+1}$.  We then have for $t > T$,
\begin{align*}
\hat{\bm{p}}(t) 
&= 
\hat{\bm{p}}(T) \exp[ \bm{Q}_{N+1} (t-T)]
\\
&=
\sum_{m=0}^{\infty}
\Poi(\theta_{N+1} (t-T),m)
\hat{\bm{p}}(T) \hat{\bm{P}}_{N+1}^m,
\end{align*}
where $\theta_{N+1}$ denotes the maximum of absolute values of
diagonal elements in $\bm{Q}_{N+1}$ and 
$\hat{\bm{P}}_{N+1} = \bm{I} +
\theta_{N+1}^{-1} \bm{Q}_{N+1}$.
To compute $\hat{\bm{p}}(t)$ for $t > T$, we truncate the infinite sum 
by $m=M_{N+1}$. 
\[
\hat{\bm{p}}^{\trunc}(t) 
=
\sum_{m=0}^{M_{N+1}}
\Poi(\theta_{N+1} (t-T),m)
\hat{\bm{p}}(T) \hat{\bm{P}}_{N+1}^m,
\quad
t > T.
\]
\blue{
For $t > T$, we define $\pi_\ell^\trunc(t,K)$ as
\[
\pi_\ell^\trunc(t,K)
=
\frac{\hat{p}_{K,K-\ell}^\trunc(t)}{\Poi(\Lambda(0,T),K)},
\quad
t > T,
\]
and the vectors $\bm{\pi}(t,K)$ and $\bm{\pi}^\trunc(t,K)$
similarly as before.}
It then follows that (cf. \eqref{eq:1-norm})
\[
\|\bm{\pi}(t,K) - \bm{\pi}^\trunc(t,K)\|_1
=
1
-
\frac{\hat{p}_K^\trunc(t)}{\Poi(\Lambda(0,T),K)},
\quad
t > T,
\]
where
\[
\hat{p}_K^\trunc(t)
=
\sum_{j=0}^{K}\hat{p}_{K,j}^\trunc(t),
\quad
t > T.
\]
The following corollary can be shown in a very similar way to 
Theorem \ref{thm:error}, so that we omit its proof.

\begin{corollary}
\label{cor:error-t>T}
For a given $\epsilon$ $(\epsilon \in (0,1])$ and $T_{\max}$
$(T_{\max}> T)$, if the truncation points $M_1, M_2, \ldots, M_N$,
and $M_{N+1}$ satisfy
\begin{align}
\sum_{m=0}^{M_n-K}\Poi\bigl(\theta_n (T_n - T_{n-1}),m\bigr)
&> 
(1-\epsilon)^{\frac{1}{N+1}},
\quad
n = 1,2,\ldots,N,
\label{eq:M_n}
\\
\sum_{m=0}^{M_{N+1}}\Poi\bigl(\theta_{N+1} (T_{\max}-T),m\bigr)
&> 
(1-\epsilon)^{\frac{1}{N+1}},
\label{eq:M_{N+1}}
\end{align}
we have
\[
\|\bm{\pi}(t,K)-\bm{\pi}^{\trunc}(t,K)\|_1 < \epsilon,
\]
for all $t \in (0,T_{\max}]$.
\end{corollary}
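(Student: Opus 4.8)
The plan is to mirror the proof of Theorem \ref{thm:error}, splitting $t \in (0,T_{\max}]$ into the arrival region $(0,T]$ and the post-arrival region $(T,T_{\max}]$, and to pay for the extra interval $(T,T_{\max}]$ by tightening the per-interval accuracy requirement from the $1/N$-th root to the $1/(N+1)$-th root. Throughout I set $\epsilon_0 \ceq 1-(1-\epsilon)^{1/(N+1)}$, so that the two hypotheses read $\sum_{m=0}^{M_n-K}\Poi(\theta_n(T_n-T_{n-1}),m) > 1-\epsilon_0$ for $n \leq N$ and $\sum_{m=0}^{M_{N+1}}\Poi(\theta_{N+1}(T_{\max}-T),m) > 1-\epsilon_0$.

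For $t \in (0,T]$ I would run the argument of Theorem \ref{thm:error} unchanged. Since $\sum_{m=0}^{M}\Poi(a,m)$ is nonincreasing in the mean $a$, the hypothesis stated at $t=T_n$ implies (\ref{eq:truncation points 1 part}) with this $\epsilon_0$ for every $t \in (T_{n-1},T_n]$, so Lemma \ref{lem:error 1 part} gives $\hat{p}_k^{\trunc}(t) > (1-\epsilon_0)^n \Pr[\widehat{A}(t)=k]$ for $t \in (T_{n-1},T_n]$, $n \leq N$. Feeding this into (\ref{eq:1-norm}) and using the Poisson convolution identity $\sum_k \Poi(\Lambda(0,t),k)\Poi(\Lambda(t,T),K-k) = \Poi(\Lambda(0,T),K)$ exactly as in Theorem \ref{thm:error}, I obtain $\|\bm{\pi}(t,K)-\bm{\pi}^{\trunc}(t,K)\|_1 < 1-(1-\epsilon)^{n/(N+1)}$; because $n \leq N$ forces $n/(N+1) \leq 1$ and $1-\epsilon \in [0,1)$, the right-hand side is at most $1-(1-\epsilon) = \epsilon$.

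The genuinely new work is the region $t \in (T,T_{\max}]$. By the Remark following Theorem \ref{thm:distribution of the number of customers}, only the $k=K$ term survives there, so (\ref{eq:1-norm}) collapses to $\|\bm{\pi}(t,K)-\bm{\pi}^{\trunc}(t,K)\|_1 = 1-\hat{p}_K^{\trunc}(t)/\Poi(\Lambda(0,T),K)$, and it suffices to show $\hat{p}_K^{\trunc}(t) > (1-\epsilon)\Poi(\Lambda(0,T),K)$. I would propagate the bound through one further interval. First, Lemma \ref{lem:error 1 part} evaluated at $t=T$ (interval $n=N$) gives $\hat{p}_K^{\trunc}(T) > (1-\epsilon_0)^N \Pr[\widehat{A}(T)=K] = (1-\epsilon_0)^N \Poi(\Lambda(0,T),K)$. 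Second, I would establish the post-$T$ analogue of (\ref{eq:lem:error-main}): since $\lambda_{N+1}=0$, no arrivals occur after $T$, so in the uniformized chain for $\widehat{\bm{Q}}_{N+1}$ the arrival count is frozen and the states with $\widehat{A}=K$ form a closed class (departures keep $\widehat{A}=K$, and the only reachable state with $\widehat{D}=K$ is the aggregated empty state, which still has $\widehat{A}=K$). Consequently the $\widehat{A}=K$ mass cannot leak to other arrival counts, and the sole truncation loss comes from discarding uniformization steps beyond $M_{N+1}$, yielding $\hat{p}_K^{\trunc}(t) \geq \hat{p}_K^{\trunc}(T)\sum_{m=0}^{M_{N+1}}\Poi(\theta_{N+1}(t-T),m)$. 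Using monotonicity in the mean together with the hypothesis on $M_{N+1}$ (and $t-T \leq T_{\max}-T$) bounds this sum below by $1-\epsilon_0$, so $\hat{p}_K^{\trunc}(t) > (1-\epsilon_0)^{N+1}\Poi(\Lambda(0,T),K) = (1-\epsilon)\Poi(\Lambda(0,T),K)$, whence $\|\bm{\pi}(t,K)-\bm{\pi}^{\trunc}(t,K)\|_1 < \epsilon$.

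The main obstacle is making this last step rigorous, i.e., justifying that truncating the uniformization sum for $\widehat{\bm{Q}}_{N+1}$ costs at most the factor $1-\epsilon_0$ on the $k=K$ marginal. The content is that closedness of the $\widehat{A}=K$ class (guaranteed by $\lambda_{N+1}=0$ and by the empty state being absorbing within that class) decouples the arrival marginal from the departure and phase dynamics, so the birth-process thinning argument of Lemma \ref{lem:error 1 part} degenerates to a pure ``number-of-uniformization-steps'' tail bound with no arrival steps to reserve. This is exactly why the condition on $M_{N+1}$ omits the $-K$ shift present in (\ref{eq:truncation points 1 part}), and it is the only place where the structure of $\widehat{\bm{Q}}_{N+1}$ (as opposed to $\bm{Q}_n$ for $n \leq N$) must be invoked.
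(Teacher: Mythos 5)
Your proof is correct and takes essentially the same approach the paper intends: the paper omits the proof, noting only that it is ``very similar'' to Theorem \ref{thm:error}, and your argument is exactly that adaptation---apply Lemma \ref{lem:error 1 part} with $\epsilon_0 = 1-(1-\epsilon)^{1/(N+1)}$ on $(0,T]$, then pay one additional factor $(1-\epsilon_0)$ on $(T,T_{\max}]$. Your key supporting observation---that since $\lambda(t)=0$ after $T$, the $\widehat{A}=K$ states form a closed class under $\widehat{\bm{Q}}_{N+1}$, so the truncation loss on $(T,T_{\max}]$ reduces to a pure Poisson tail bound on the number of uniformization steps, which is precisely why the condition on $M_{N+1}$ carries no $-K$ shift---is the correct and complete way to make the paper's omitted step rigorous.
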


\red{
\begin{remark}
In \eqref{eq:M_{N+1}}, the upper limit of the summation is given by 
$M_{N+1}$ instead of $M_{N+1}-K$ as in \eqref{eq:M_n}. 
This difference is due to the following observation.
Because $\lambda(t) = 0$ for $t > T$, we have
\[
\hat{p}_K^\trunc(t)
=
\hat{p}_K^\trunc(T_N)
\sum_{m=0}^{M_{N+1}}
\Poi(\theta_{N+1}(t-T_N), m),
\quad
t > T.
\]
Therefore, if the truncation points $M_n$ 
($n = 1,2,\ldots, N, N+1$) satisfy \eqref{eq:M_n} and \eqref{eq:M_{N+1}}, 
then (cf.\ Lemma \ref{lem:error 1 part})
\[
(1-\epsilon)\Pr[\widehat{A}(t)=K]
<
\hat{p}_K^\trunc(t)
\leq
\Pr[\widehat{A}(t)=K],
\quad
t \in (T,T_{\max}],
\]
which leads to Corollary \ref{cor:error-t>T}.
\end{remark}
}

\section{Numerical examples for a Markovian queue}\label{sec:numerics}

In this section, we show some numerical examples of a CTBP/M/$c$
queue, where the CTBP is assumed to have a piecewise constant pdf
$f(t)$ and the service rate of each server is given by $\mu$. 
\red{%
In this example, $\widehat{\mathcal{S}}$ can be treated as a singleton
because the system state at any time $t$ is completely characterized by $(A(t), D(t))$.
}
In Appendix \ref{appendix:auxiliary}, we summarize the auxiliary 
model for this queue.

In all numerical examples, we set 
$T = 300$, $N = 30$, $T_n = 10n$ ($n=0,1,\ldots,30$),
$\mu = 2.5$, $c=2$, and $\epsilon = 10^{-14}$.
Moreover, we set
\begin{align}
f(t) = \Gamma n^2 e^{-0.25n},
\quad
t \in (T_{n-1},T_n],\ n=1,2,\ldots,30,
\label{eq:f(t)}
\end{align}
where $\Gamma$ denotes a normalizing constant, chosen such that 
\[
\Gamma = \bigg( \sum_{n=1}^{30} 10 n^2 e^{-0.25n} \bigg)^{-1}.
\]
In what follows, we consider $K=900$, 1,000, and 1,100, where $\alpha$
is fixed to 1,000. Figure \ref{fig:f(t)} shows $f(t)$ in
(\ref{eq:f(t)}) and the rate function $\lambda(t)=1000 f(t)$ of the
NHPP in the auxiliary model. In this setting, truncation points
$M_1,M_2,\ldots,M_{30}$ range from 1,212 to 1,301.

\begin{figure}[!t]
\centering
\subfigure[$f(t)$.]{%
\includegraphics[keepaspectratio,scale=0.7]{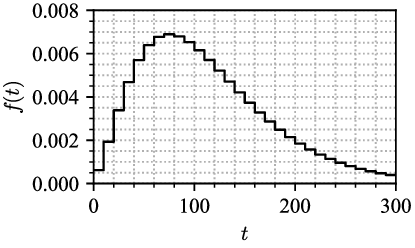}}
\subfigure[$\lambda(t)=1000 f(t)$.]{%
\includegraphics[keepaspectratio,scale=0.7]{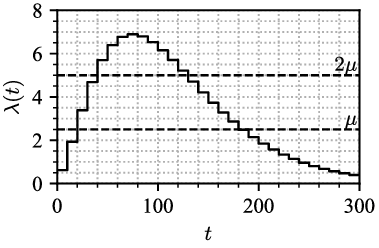}}
\caption{The pdf $f(t)$ and the rate functions $\lambda(t)$ in the NHPP.}
\label{fig:f(t)}
\end{figure}

For $K=900$, 1,000, and 1,100, Fig.\ \ref{fig:distribution} shows the
queue length distribution as a heatmap. 
\red{
The computation time required to generate each heatmap varies
significantly across hardware platforms: it took approximately 
5 hours on an Intel Core i7-9700K CPU 
with 16~GB RAM (Ubuntu~24.04), whereas it required only about 
3 minutes on a MacBook~Pro equipped with an Apple~M4 processor 
and 24~GB RAM.
}
We also plot 30 sample paths
of the queue length $L(t)$ obtained by simulation experiments. We
observe that most of the sample paths traverse the dark area of the
heat map, as expected.

\begin{figure}[!t]
\centering
\subfigure[$K=900$]{%
\includegraphics[bb= 0.000000 0.000000 194.400389 159.720319,%
keepaspectratio,scale=0.75]{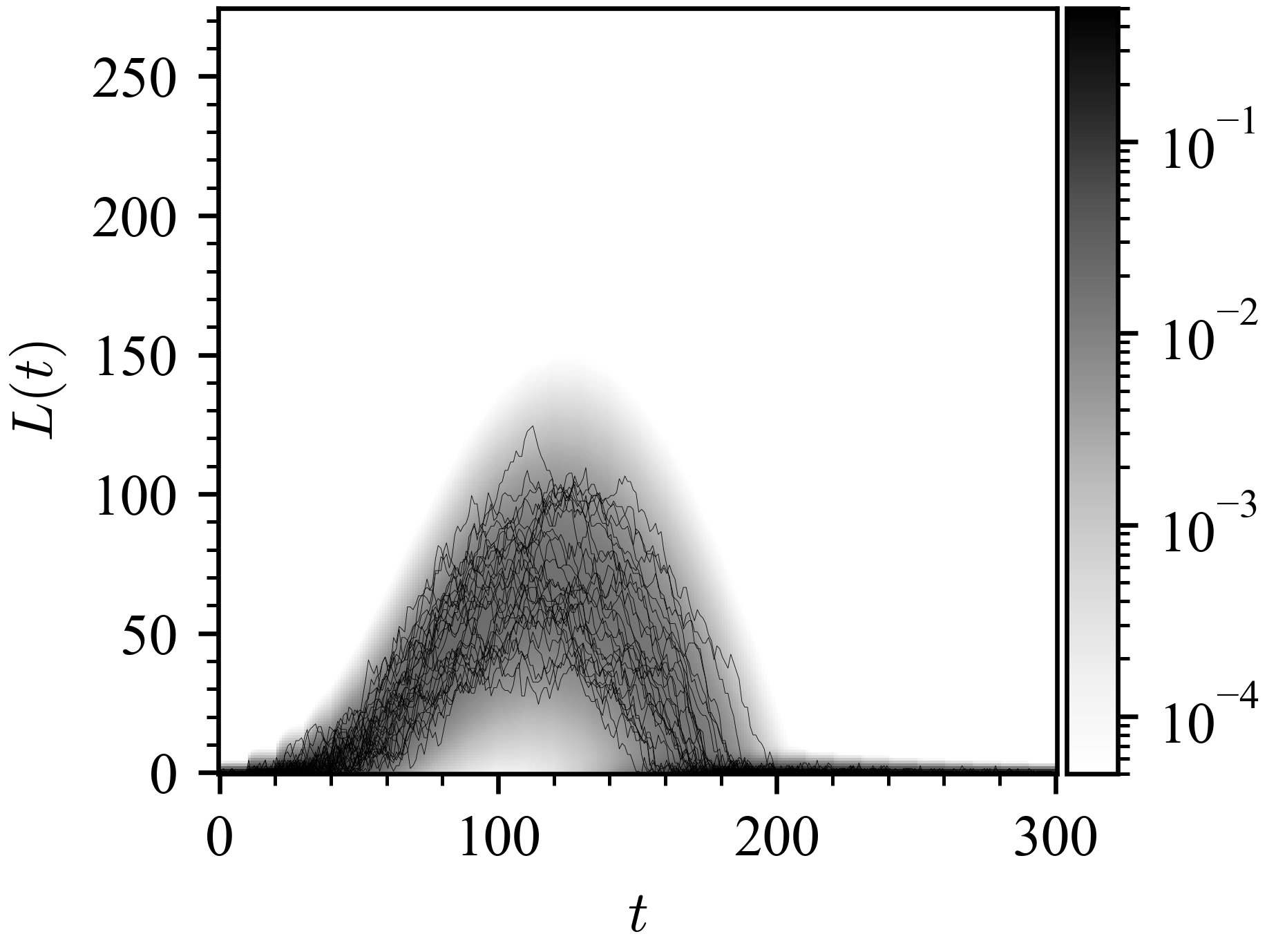}}
\subfigure[$K=1,000$]{%
\includegraphics[bb= 0.000000 0.000000 194.400389 159.720319,%
keepaspectratio,scale=0.75]{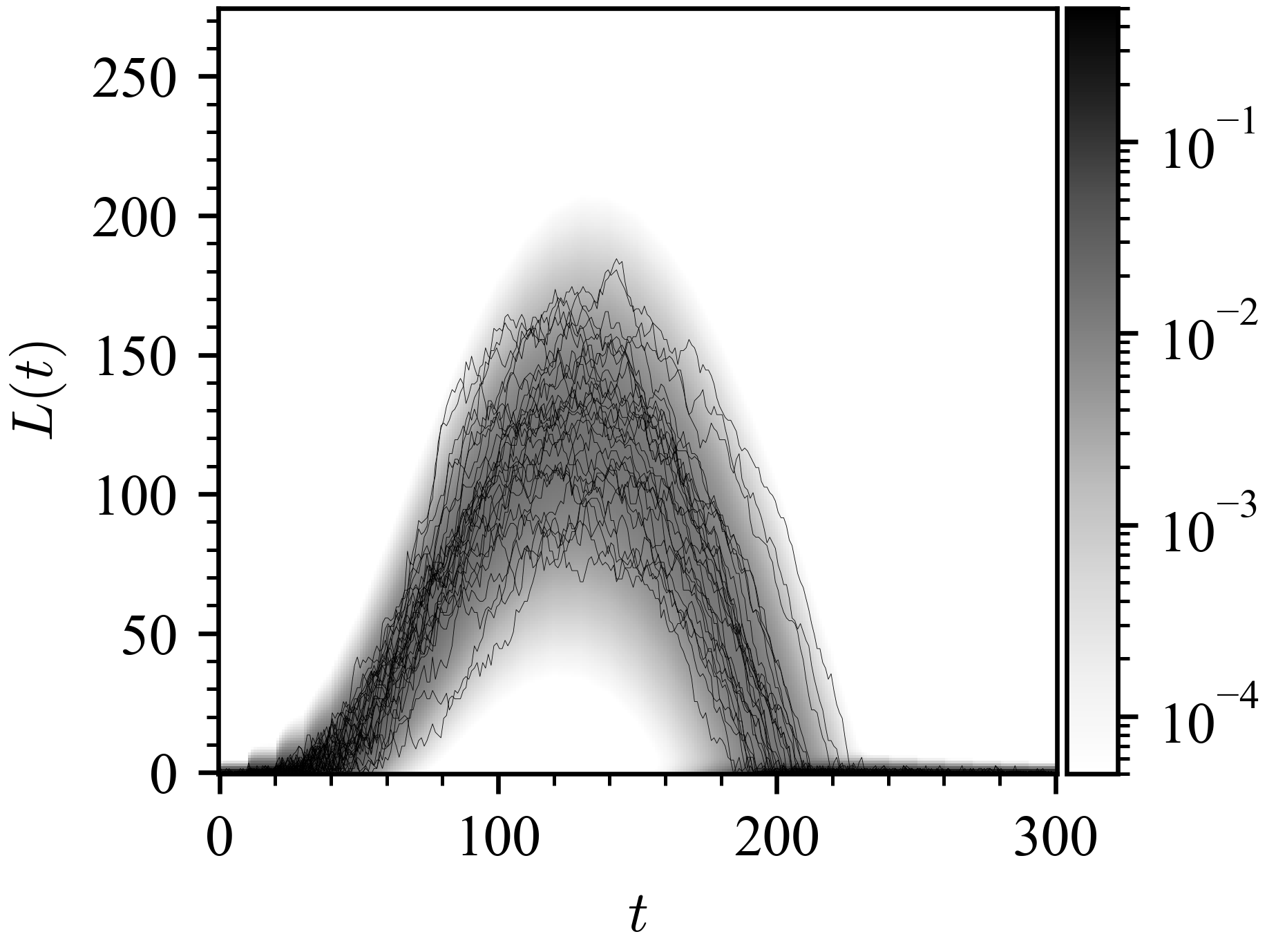}}
\newline
\subfigure[$K=1,100$]{%
\includegraphics[bb= 0.000000 0.000000 194.400389 159.720319,%
keepaspectratio,scale=0.75]{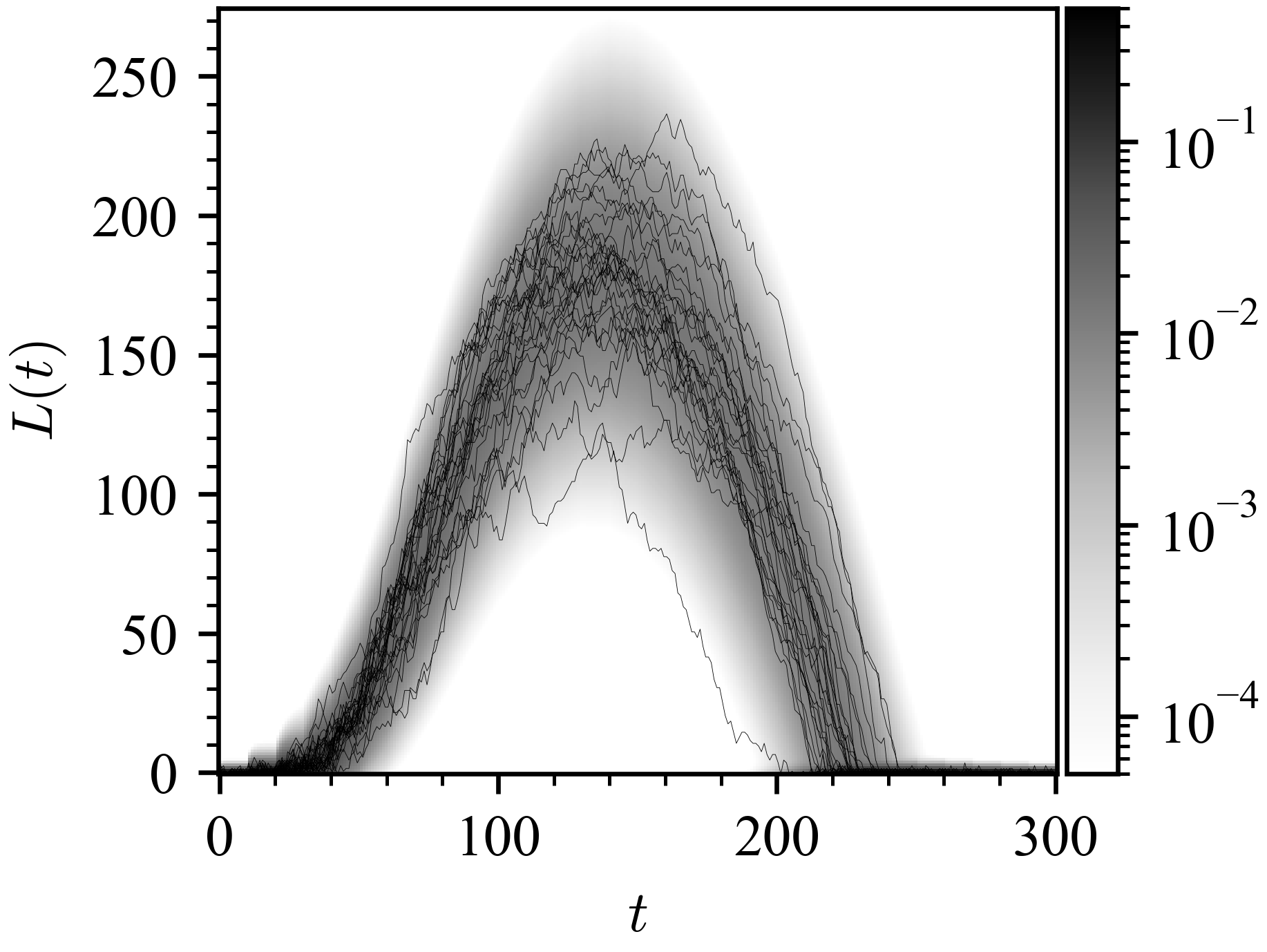}}
\caption{The distribution of the number of customers 
in the system and some sample paths.}
\label{fig:distribution}
\end{figure}

Figure \ref{fig:mean} shows the time-dependent mean queue length
$\E[L(t)]$. 
\red{We observe that $\E[L(t)]$ increases as $K$ increases, and the time at which 
$\E[L(t)]$ attains its maximum shifts to a later point for larger values of $K$.}
Note that a small change in the total number $K$ of arrivals has
a significant impact on the mean queue length. In particular, the
increase/decrease of 10\% in $K$ causes 40 to 50\% increase/decrease of
the maximum of the time-dependent mean queue length.

\begin{figure}[!t]
\centering
\includegraphics[keepaspectratio,scale=0.75]{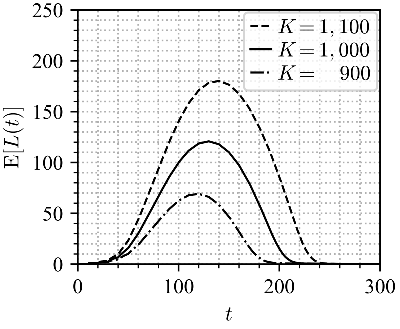}
\caption{The time-dependent mean queue length.}
\label{fig:mean}
\end{figure}

Lastly, we compare the CTBP/M/$2$ queue with the conventional
M${}_t$/M/$2$ queue with arrival rate function $\lambda(t) = Kf(t)$.
Figure \ref{fig:comparison of means with MtMc} shows the mean, median,
mode, and 95th percentile of the time-dependent queue length
distribution and Fig.\ \ref{fig:comparison of distribution with MtMc}
shows the queue length distributions at times $t = 50, 100, 150$, and
200. While there are no significant differences in the mean, median,
and mode in the two models, the time-dependent queue length in our
model is less variable than in the M${}_t$/M/$2$ queue.  In the M${}_t$/M/$2$
queue, the number $A(T)-A(t)$ of arrivals in $(t,T]$ is independent of
the number $A(t)$ of arrivals before time $t$, whereas in the CTBP,
$A(T)-A(t) = K- A(t)$, i.e., the numbers of arrivals before and after
time $t$ are negatively correlated. As a result, the time-dependent
queue length in the CTBP/M/$2$ queue is less variable than that of the
M${}_t$/M/$2$ queue.  Therefore, the model discussed in this paper
provides an essentially different perspective in performance
evaluation from that of the conventional queueing model.

\begin{figure}[!t]
\centering
\subfigure[Mean]{%
\includegraphics[keepaspectratio,scale=0.75]{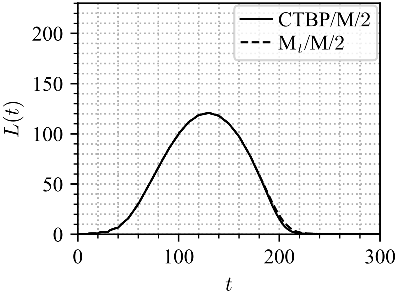}}
\subfigure[Median]{%
\includegraphics[keepaspectratio,scale=0.75]{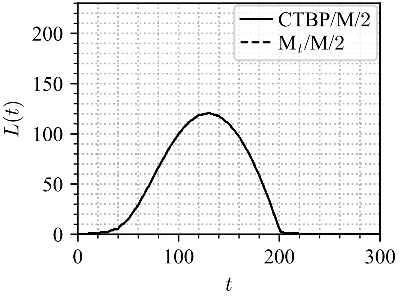}}
\\
\subfigure[Mode]{%
\includegraphics[keepaspectratio,scale=0.75]{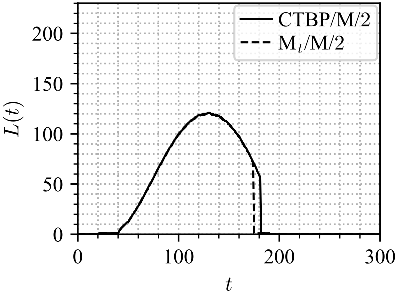}}
\subfigure[95th percentile]{%
\includegraphics[keepaspectratio,scale=0.75]{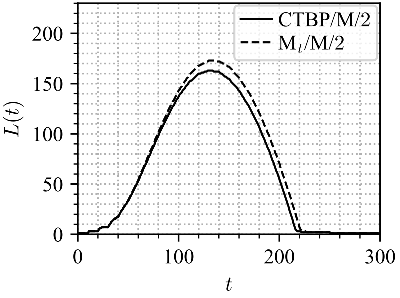}}
\caption{Comparison of the mean, median, mode, and 95th percentile 
of the time-dependent queue length distribution ($K=1,000$).}
\label{fig:comparison of means with MtMc}
\end{figure}
\begin{figure}[t]
\centering
\subfigure[$t=50$]{%
\includegraphics[keepaspectratio,scale=0.75]{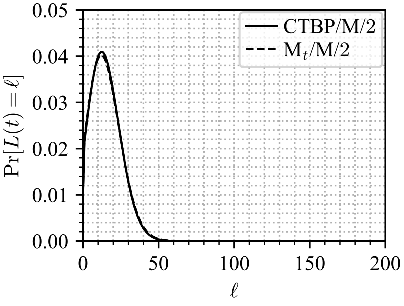}}
\subfigure[$t=100$]{%
\includegraphics[keepaspectratio,scale=0.75]{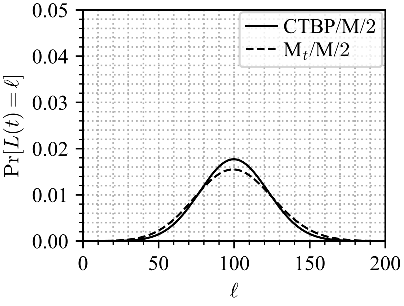}}
\\
\subfigure[$t=150$]{%
\includegraphics[keepaspectratio,scale=0.75]{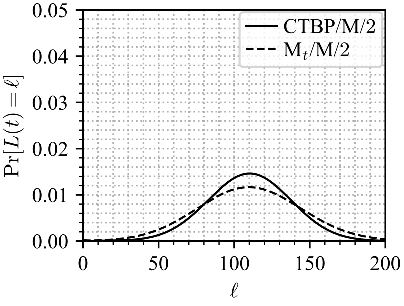}}
\subfigure[$t=200$]{%
\includegraphics[keepaspectratio,scale=0.75]{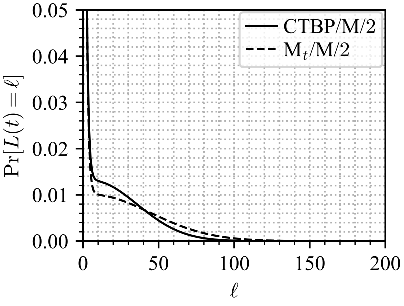}}
\caption{Comparison of the queue length distribution ($K=1,000$).}
\label{fig:comparison of distribution with MtMc}
\end{figure}

\section{Conclusion}\label{sec:conclusion}

In this paper, we considered queueing models with CTBP arrivals,
i.e., arrival times of $K$ customers are i.i.d.\ on a finite time
interval $[0, T]$. First, in a general framework, we showed that the
time-dependent queue length distribution in such a queueing model can
be expressed in terms of the time-dependent joint distribution of the
numbers of arrivals and departures in the auxiliary model with NHPP
arrivals.  Next, we presented a computational procedure for the
time-dependent queue length distribution in the \red{piecewise} Markovian case. A
notable feature of this procedure is that the truncation error bound
can be set as the input.  In numerical examples, we observed that the
total number $K$ of arrivals has a significant impact on the
time-dependent queue length distribution. Furthermore, the qualitative
difference between queues with CTBP arrivals and with NHPP arrivals
was clarified \red{(cf. Fig. \ref{fig:comparison of means with MtMc} 
and Fig. \ref{fig:comparison of distribution with MtMc})}.

We close this paper with some remarks on the computational aspects of
our results. Theorem \ref{thm:distribution of the number of customers}
shows that we have to compute the time-dependent joint pmf of the
numbers of arrivals and departures in the auxiliary model, and if
$f(t)$ ($t \in [0,T]$) is piecewise constant, the uniformization can
be utilized in principle, as shown in (\ref{eq:uniformization}).
Although we derived the upper bound of truncation error, the
computation of Poisson probabilities can be another source of
numerical error, especially for a large $K$.
\red{
To compute the pmf $\Poi(\Lambda(0,T),K)$ of the Poisson distribution
with sufficiently high accuracy, the parameter $\alpha$ must be 
chosen proportional to $K$ (cf.\ \eqref{eq:Lambda-F}).
On the other hand, when $\alpha$ is large (equivalently, when
$\Lambda(0,T)$ is large) the evaluation of
$\Poi(\Lambda(0,T),0) = e^{-\Lambda(0,T)}$ may suffer from exponent
underflow, making numerically stable computation nontrivial
\cite[pp. 19--20]{Tijms1994}.}
As regards this, one may
refer to \cite{Fox88}, where for a given error tolerance, the left and
right truncation points in the Poisson distribution can be
pre-determined. Besides, the computational cost can be problematic.
Because the dimension of the square matrix $\bm{P}_n$ is of order
$O(K^2 |\mathcal{S}|)$, the computational cost would be huge for a
large $K$ if we implement (\ref{eq:uniformization}) in a
straightforward manner. This problem can be mitigated by truncating
$\hat{\bm{p}}(T_{n-1})$ in (\ref{eq:uniformization}) based on the
observation that $\widehat{A}(T_{n-1})$ follows a Poisson distribution
with mean $\Lambda(0,T_{n-1})$. The development of a computational
procedure for large-scale systems remains as future work.

\red{
In this paper, we assumed that all customers arrive within the finite 
time interval $[0,T]$. In practical systems, however, it is often 
the case that some customers have already arrived before the service 
starts, and a model allowing arrivals prior to time $0$ has been 
considered in \cite{Boxma2025}. Our only assumption on 
the service mechanism is Assumption \ref{asm:service mechanism}, 
which does not require that service begins precisely at time $0$. 
Therefore, our framework can be extended to incorporate such models by 
appropriately modifying the initial-state probability vector $\bm{p}(0)$.}

\begin{acknowledgements}
This research was supported in part by JSPS KAKENHI Grant Number 24K14839.
\end{acknowledgements}

\appendix

\section{Proof of Lemma \ref{lem:distribution of cumulative arrivals}}
\label{appendix:lem:distribution of cumulative arrivals}

Without loss of generality, let $0 \leq t_1 \leq t_2 \leq \cdots \leq
t_m \leq t$.  In this setting, if $0 \leq k_1 \leq k_2 \leq \cdots
\leq k_m \leq k$ does not hold, both sides of
\eqref{eq:arrival_distribution} become $0$, so that
\eqref{eq:arrival_distribution} holds.  We thus assume $0 \leq k_1
\leq k_2 \leq \cdots \leq k_m \leq k \leq K$ below.  For convenience,
let $t_0=0$, $t_{m+1}=t$, $k_0=0$, $k_{m+1}=k$, $N(s,t) = A(t)-A(s)$,
and $\widehat{N}(s,t) = \widehat{A}(t)-\widehat{A}(s)$ ($0 \leq s \leq t \leq
T$). It then follows that
\begin{align}
\lefteqn{%
\Pr\bigl[A(t_1)=k_1, A(t_2)=k_2,\ldots,A(t_m)=k_m \mid A(t)=k\bigr] 
}\qquad
\notag
\\
&=
\frac{1}{\Pr[A(t)=k]}
\cdot
\Pr\bigl[
N(t_{i-1},t_i) = k_i-k_{i-1}
(i=1,2,\ldots,m+1),
N(t,T)=K-k
\bigr]
\notag
\\
&=
\bigg[ \binom{K}{k} \bigl(F(0,t)\bigr)^k\bigl(F(t,T)\bigr)^{K-k} \bigg]^{-1}
\notag
\\
&\qquad \cdot
\frac{\ds K!}
{\left(\ds\prod_{i=1}^{m+1}(k_i-k_{i-1})!\right)(K-k)!}
\cdot 
\bigg(
\prod_{i=1}^{m+1}\bigl(F(t_{i-1},t_i)\bigr)^{k_i-k_{i-1}}
\bigg)
\bigl( F(t,T) \bigr)^{K-k}
\notag\\
&=
\frac{k!}
{\ds \prod_{i=1}^{m+1}(k_i-k_{i-1})!}
\cdot
\frac{\ds\prod_{i=1}^{m+1}\bigl(F(t_{i-1},t_i)\bigr)^{k_i-k_{i-1}}}
{\bigl(F(0,t)\bigr)^k}.
\label{eq:arrivals in our model}
\end{align}

On the other hand, using the independent increment property of the
NHPP, we have
\begin{align}
\lefteqn{%
\Pr\bigl[
\widehat{A}(t_1) = k_1, \widehat{A}(t_2) = k_2,
\ldots,\widehat{A}(t_m) = k_m 
\mid \widehat{A}(t)=k
\bigr]
}\qquad
\notag
\\
&=
\frac{1}{\Pr[\widehat{A}(t)=k]}
\cdot
\Pr\bigl[
\widehat{N}(t_{i-1},t_i) = k_i-k_{i-1}
(i=1,2,\ldots,m+1)
\bigr]
\notag
\\
&=
\bigg[
e^{-\Lambda(0,t)} \frac{\bigl(\Lambda(0,t)\bigr)^k}{k!} 
\bigg]^{-1}
\prod_{i=1}^{m+1}
e^{-\Lambda(t_{i-1},t_i)}
\frac{\bigl(\Lambda(t_{i-1},t_i)\bigr)^{k_i-k_{i-1}}}
{(k_i-k_{i-1})!}
\notag
\\
&=
\frac{k!}{\ds \prod_{i=1}^{m+1}(k_i-k_{i-1})!}
\cdot
\frac{\ds\prod_{i=1}^{m+1}\bigl(\Lambda(t_{i-1},t_i)\bigr)^{k_i-k_{i-1}}}
{\bigl(\Lambda(0,t)\bigr)^k}.
\label{eq:arrivals in non-homogeneous Poisson}
\end{align}
Eq.\ (\ref{eq:arrival_distribution}) now follows from
\eqref{eq:Lambda-F}, \eqref{eq:arrivals in our model}, and
\eqref{eq:arrivals in non-homogeneous Poisson}.

When $t \geq T$, we have $\Pr[A(t)=K]=1$. Furthermore, 
\[
\widehat{A}(t) = K\ \  (t>T)\ 
\Leftrightarrow\ \widehat{A}(T) = K.
\]
These observations lead to (\ref{eq:arrival-condition:t=T}).


\section{The auxiliary model in numerical examples}
\label{appendix:auxiliary}

In the $n$-th interval $(T_{n-1}, T_n]$, the auxiliary model is
formulated as a time-homogeneous, continuous-time Markov chain
$\bigl(\widehat{A}(t),\widehat{D}(t)\bigr)$ whose transition rate
diagram is given by Fig.\ \ref{fig:transition rate}.  When the
states are arranged in lexicographical order, $\bm{Q}_n$ in
\eqref{eq:transition rate matrix} is given by
\[
\bm{Q}_n = 
\begin{bmatrix}
\bm{A}_{n,0} & \bm{B}_{n,0} & \bm{O}   & \cdots & \bm{O} & \bm{O} \\
\bm{O}   & \bm{A}_{n,1} & \bm{B}_{n,1} & \cdots & \bm{O} & \bm{O} \\
\bm{O}   & \bm{O}   & \bm{A}_{n,2} & \cdots & \bm{O} & \bm{O} \\
\vdots   & \vdots   & \vdots   & \ddots & \vdots & \vdots \\
\bm{O}   & \bm{O}   & \bm{O}   & \cdots & \bm{A}_{n,K-1} & \bm{B}_{n,K-1} \\
\bm{O}   & \bm{O}   & \bm{O}   & \cdots & \bm{O} & \bm{A}_{n,K} 
\end{bmatrix},
\]
where $\bm{A}_{n,k}$ is a $(k+1)\times(k+1)$ square matrix,
$\bm{B}_{n,k}$ is a $(k+1)\times(k+2)$ matrix, and their elements are
given by
\begin{align*}
(\bm{A}_{n,k})_{i,j} 
&= 
\begin{cases}
-\lambda_n - \min(k-i,c) \mu, & j=i,
\\
\min(k-i,c) \mu, & j=i+1,
\\
0, & \mbox{otherwise}.
\end{cases}
\\
(\bm{B}_{n,k})_{i,j} 
&= 
\begin{cases}
\lambda_n, & j=i,
\\
0, & \mbox{otherwise}.
\end{cases}
\end{align*}

\begin{figure}[!t]
\centering
\begin{tikzpicture}[node/.style={draw, ellipse, minimum width=30pt, minimum height=20pt}]
\node[node](0_0){$0,0$};
\node[node, right = 1.0cm of 0_0](1_0){$1,0$};
\node[node, below = 0.8cm of 1_0](1_1){$1,1$};
\node[node, right = 1.0cm of 1_0](2_0){$2,0$};
\node[node, below = 0.8cm of 2_0](2_1){$2,1$};
\node[node, below = 0.8cm of 2_1](2_2){$2,2$};
\node[right = 1.0cm of 2_0](dot0){$\cdots$};
\node[right = 1.0cm of 2_1](dot1){$\cdots$};
\node[right = 1.0cm of 2_2](dot2){$\cdots$};
\node[node, right = 1.0cm of dot0](K_0){$K,0$};
\node[node, below = 0.8cm of K_0](K_1){$K,1$};
\node[node, below = 0.8cm of K_1](K_2){$K,2$};
\node[below = 0.8cm of K_2](vdot){$\vdots$};
\node[below = 1cm of dot2](ddot){$\ddots$};
\node[node, below = 0.8cm of vdot](K_K){$K,K$};
\node[node, right = 1.0cm of K_2](K+1){$K+1$};

\path[->, >=stealth]
(0_0) edge[above] node{$\lambda_n$} (1_0)
(1_0) edge[above] node{$\lambda_n$} (2_0)
(1_1) edge[above] node{$\lambda_n$} (2_1)
(2_0) edge[above] node{$\lambda_n$} (dot0)
(2_1) edge[above] node{$\lambda_n$} (dot1)
(2_2) edge[above] node{$\lambda_n$} (dot2)
(dot0) edge[above] node{$\lambda_n$} (K_0)
(dot1) edge[above] node{$\lambda_n$} (K_1)
(dot2) edge[above] node{$\lambda_n$} (K_2)

(K_0) edge[above right, in=90, out=0] node{$\lambda_n$} (K+1)
(K_1) edge[above right, in=120, out=0] node{$\lambda_n$} (K+1)
(K_2) edge[above] node{$\lambda_n$} (K+1)
(K_K) edge[below right, in=270, out=0] node{$\lambda_n$} (K+1)

(1_0) edge[left] node{$\mu$} (1_1)
(2_0) edge[left] node{$2\mu$} (2_1)
(2_1) edge[left] node{$\mu$} (2_2)
(K_0) edge[left] node{$\min(K,c)\mu$} (K_1)
(K_1) edge[left] node{$\min(K-1,c)\mu$} (K_2)
(K_2) edge[left] node{$\min(K-2,c)\mu$} (vdot)
(vdot) edge[left] node{$\mu$} (K_K);
\end{tikzpicture}
\caption{State transition rate diagram of the auxiliary model,
where state $K+1$ is absorbing.}
\label{fig:transition rate}
\end{figure}
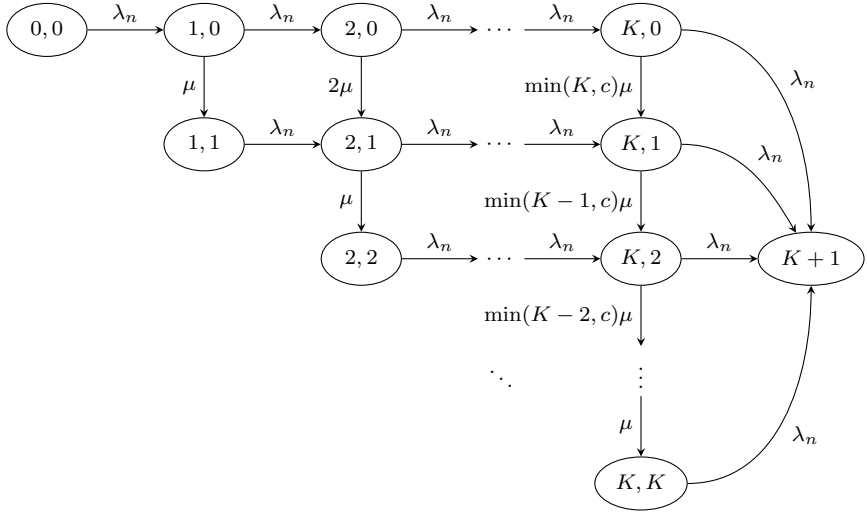

Because $\bm{Q}_n$ is sparse, the computation of 
$\hat{\bm{p}}^{\trunc}(T_{n-1}) \bm{P}_n^m$ ($m=0,1,\ldots$) in 
(\ref{eq:computed value of transient state probability})
can be simplified as follows. Let $\hat{\bm{p}}^{(m)}(T_{n-1})
= \hat{\bm{p}}^{\trunc}(T_{n-1}) \bm{P}_n^m$ ($m=0,1,\ldots$). We then have
\[
\hat{\bm{p}}^{(0)}(T_{n-1})
=
\hat{\bm{p}}(T_{n-1}),
\qquad
\hat{\bm{p}}^{(m)}(T_{n-1})
=
\hat{\bm{p}}^{(m-1)}(T_{n-1}) \bm{P}_n,
\quad
m=1,2,\ldots.
\]
Let $p^{(m)}_{k,j}(T_{n-1})$ ($k=0,1,\ldots,K$, $j=0,1,\ldots,k$) 
denote the $(k,j)$-th element of $\hat{\bm{p}}^{(m)}(T_{n-1})$.
It then follows that
\begin{align*}
p^{(m+1)}_{0,0}(T_{n-1}) 
&= 
\left(1-\frac{\lambda_n}{\theta_n}\right) 
p^{(m)}_{0,0}(T_{n-1}),
\quad
i = j = 0, 
\\
p^{(m+1)}_{i,0}(T_{n-1})
&= 
\frac{\lambda_n}{\theta_n} 
\cdot 
p^{(m)}_{i-1,0}(T_{n-1})
+ 
\left(
1-\frac{\lambda_n+\min(i,c)\mu}{\theta_n}
\right)
p^{(m)}_{i,0}(T_{n-1}),
\\
& \hspace*{12em}
1 \leq i \leq K,\ j = 0, 
\\
p^{(m+1)}_{i,j}(T_{n-1}) 
&= 
\frac{\lambda_n}{\theta_n} 
\cdot 
p^{(m)}_{i-1,j}(T_{n-1})
+ 
\frac{\min(i-j+1,c)\mu}{\theta_n} 
\cdot 
p^{(m)}_{i,j-1}(T_{n-1}) 
\\
& \qquad {} 
+ 
\left(
1-\frac{\lambda_n+\min(i-j,c)\mu}{\theta_n}
\right)
p^{(m)}_{i,j}(T_{n-1}),
\\
& \hspace*{10em}
1 \leq i \leq K,\ 1 \leq j \leq i-1,
\\
p^{(m+1)}_{i,i}(T_{n-1})
&= 
\frac{\mu}{\theta_n} 
\cdot 
p^{(m)}_{i,i-1}(T_{n-1})
+ 
\left(
1-\frac{\lambda_n}{\theta_n}
\right) 
p^{(m)}_{i,i}(T_{n-1}),
\\
& \hspace*{12em}
1 \leq i \leq K,\ j = i,
\end{align*}
where $\theta_n = \lambda_n + c \mu$.





\end{document}